\title{Pseudo-Anosov flows on hyperbolic L-spaces}
\author[John A. Baldwin]{John A. Baldwin}
\address{Department of Mathematics \\ Boston College}
\email{john.baldwin@bc.edu}
\author[Steven Sivek]{Steven Sivek}
\address{Department of Mathematics\\Imperial College London}
\email{s.sivek@imperial.ac.uk}
\author[Jonathan Zung]{Jonathan Zung}
\address{Department of Mathematics \\ Massachusetts Institute of Technology}
\email{jzung@mit.edu}
\newtheorem*{rep@theorem}{\rep@title}
\newcommand{\newreptheorem}[2]{%
\newenvironment{rep#1}[1]{%
 \def\rep@title{#2 \ref{##1}}%
 \begin{rep@theorem}}%
 {\end{rep@theorem}}}
\newtheorem {theorem}{Theorem}
\newtheorem {lemma}[theorem]{Lemma}
\newtheorem {proposition}[theorem]{Proposition}
\newtheorem {corollary}[theorem]{Corollary}
\newtheorem {question}[theorem]{Question}
\numberwithin{equation}{section}
\numberwithin{theorem}{section}
\theoremstyle{definition}
\newtheorem{remark}[theorem]{Remark}
\newtheorem*{remark*}{Remark}
\newlist{pcases}{enumerate}{1}
\setlist[pcases]{
  label=\bf{Case~\arabic*:}\protect\thiscase.~,
  ref=\arabic*,
  align=left,
  labelsep=0pt,
  leftmargin=0pt,
  labelwidth=0pt,
  parsep=0pt
}
\newcommand{\case}[1][]{%
  \if\relax\detokenize{#1}\relax
    \def\thiscase{}%
  \else
    \def\thiscase{~#1}%
  \fi
  \item
}
\newcommand{\Z}{\mathbb{Z}}
\newcommand{\R}{\mathbb{R}}
\newcommand{\Q}{\mathbb{Q}}
\newcommand{\rank}{\operatorname{rank}}
\newcommand{\spectrum}{\mathcal P}
\renewcommand{\phi}{\varphi}
\newcommand{\CH}{\mathit{CH}}
\DeclareFontFamily{U}{mathx}{\hyphenchar\font45}
\DeclareFontShape{U}{mathx}{m}{n}{
      <5> <6> <7> <8> <9> <10>
      <10.95> <12> <14.4> <17.28> <20.74> <24.88>
      mathx10
      }{}
\DeclareSymbolFont{mathx}{U}{mathx}{m}{n}
\DeclareMathAccent{\widecheck}{0}{mathx}{"71}
\newcommand{\hfhat}{\widehat{\mathit{HF}}}
\DeclareFontFamily{OMX}{MnSymbolE}{}
\DeclareSymbolFont{MnLargeSymbols}{OMX}{MnSymbolE}{m}{n}
\DeclareFontShape{OMX}{MnSymbolE}{m}{n}{
    <-6>  MnSymbolE5
   <6-7>  MnSymbolE6
   <7-8>  MnSymbolE7
   <8-9>  MnSymbolE8
   <9-10> MnSymbolE9
  <10-12> MnSymbolE10
  <12->   MnSymbolE12
}{}
\DeclareFontShape{OMX}{MnSymbolE}{b}{n}{
    <-6>  MnSymbolE-Bold5
   <6-7>  MnSymbolE-Bold6
   <7-8>  MnSymbolE-Bold7
   <8-9>  MnSymbolE-Bold8
   <9-10> MnSymbolE-Bold9
  <10-12> MnSymbolE-Bold10
  <12->   MnSymbolE-Bold12
}{}
\let\llangle\@undefined
\let\rrangle\@undefined
\DeclareMathDelimiter{\llangle}{\mathopen}%
                     {MnLargeSymbols}{'164}{MnLargeSymbols}{'164}
\DeclareMathDelimiter{\rrangle}{\mathclose}%
                     {MnLargeSymbols}{'171}{MnLargeSymbols}{'171}
\newcounter{desccount}
\newcommand{\descref}[1]{\hyperref[#1]{#1}}
\tikzset{every picture/.style=thick}
\tikzset{link/.style = { white, double = black, line width = 1.75pt, double distance = 1.25pt, looseness=1.75 }}
\tikzset{crossing/.style = {draw, circle, dotted, minimum size=0.5cm, inner sep=0, outer sep=0}}
\pgfplotsset{compat=1.12}
\begin{document}

\begin{abstract} We prove that for each  $n\in\mathbb{N}$ there is a hyperbolic L-space with $n$ pseudo-Anosov flows, no two of which are orbit equivalent. These flows have no perfect fits and are thus quasigeodesic. In addition, our flows admit positive Birkhoff sections, which we argue implies that they give rise to $n$ universally tight contact structures whose lifts to any finite cover are non-contactomorphic. This argument involves cylindrical contact homology together with the work of Barthelm\'e, Frankel, and Mann on the reconstruction of pseudo-Anosov flows from their closed orbits. These results answer more general versions of questions posed by Calegari and by Min and Nonino.
\end{abstract}

\maketitle

\section{Introduction}
Two flows on a closed 3-manifold are \emph{orbit equivalent} if there is an orientation-preserving homeomorphism of the manifold sending  orbits of one flow to those of the other. It is a central problem to understand  how the topology  of a 3-manifold is related to the kinds and numbers of flows that it can support. For example, Problem 3.53 of Kirby's list \cite{kirby-list} asks whether there exists for every $n\in \mathbb{N}$  a closed hyperbolic 3-manifold with $n$ orbit inequivalent Anosov flows. 

There has been a flurry of recent work related to this problem. In \cite{bby}, B\'eguin, Bonatti, and Yu found the first examples of closed 3-manifolds with arbitrarily many orbit inequivalent Anosov flows, but their examples were toroidal.  Then in  \cite{bowden-mann}, Bowden and Mann did the same for closed hyperbolic 3-manifolds, solving the Kirby problem above.  Clay and Pinsky later found additional, simpler toroidal examples in \cite{clay-pinsky}, obtained by gluing together two trefoil complements.

Such flows are closely related with taut foliations. Indeed, every closed 3-manifold with an Anosov flow has a taut foliation, as does any 3-manifold supporting a pseudo-Anosov flow without odd-pronged singular orbits.\footnote{In this paper, we do not assume that taut foliations are co-orientable unless that is explicitly stipulated.} Conversely, the existence of a co-orientable taut foliation is conjectured to imply the existence of a pseudo-Anosov flow.  It is therefore natural to ask the analogue of the Kirby question  for closed hyperbolic 3-manifolds with no taut foliations, where \emph{Anosov} is (necessarily) replaced by \emph{pseudo-Anosov}. Our main result is an affirmative answer to this question. 

We will describe this result below, and then explain how it resolves more general versions of a question of Calegari on  quasigeodesic flows and a conjecture of Min and Nonino about universally tight contact structures on hyperbolic L-spaces.

For each $n\in \mathbb{N}$, let $\mathbb{L}_n = L_0  \cup \dots \cup L_{n-1}\subset S^3$ be the oriented chain link shown in Figure~\ref{fig:chainlink}. Given an ordered $n$-tuple $(r_0,\dots,r_{n-1})$ of rational numbers, let $\mathbb{L}_n(r_0,\dots,r_{n-1})$ be the 3-manifold obtained  by performing $r_i$-surgery on $L_i$ for  each $i = 0,\dots n-1$. Our main theorem is:

\begin{theorem}
\label{thm:main}
For each even $n\geq 4$, there are infinitely many  $(r_0,\dots,r_{n-1})\in \big(\frac{1}{4}\Z\big)^n$ such that \[\mathbb{L}_n(r_0,\dots,r_{n-1})\] is a hyperbolic L-space with $n$ distinct pseudo-Anosov flows, no two of which are orbit equivalent. In particular, these manifolds have no taut foliations.
\end{theorem}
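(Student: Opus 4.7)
The plan is to produce the slope tuples and flows explicitly from the chain link, and then verify each desired property in turn, leveraging the cyclic symmetry of $\mathbb{L}_n$ throughout.

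First I would set up a pseudo-Anosov flow before filling. The exterior $E_n = S^3 \setminus \nu(\mathbb{L}_n)$ is hyperbolic and carries a cyclic symmetry $\sigma$ of order $n$ permuting the boundary tori. I would exhibit a pseudo-Anosov flow $\phi$ on $E_n$ whose behaviour at each cusp is given by a prescribed boundary slope, for instance via a Fried-type surgery on a simpler model flow, or by realising $E_n$ as a fibred $3$-manifold with pseudo-Anosov monodromy. The $n$ candidate flows on the surgered manifold will be the cyclic translates $\phi_i := \sigma^i \cdot \phi$; note that even though $\sigma$ will not descend to the filled manifold once the $r_j$'s are distinct, each individual translate still has a chance of extending across the surgeries.

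Second, I would pin down an infinite family of slope tuples $(r_0, \ldots, r_{n-1}) \in \bigl(\tfrac14\Z\bigr)^n$ subject to three constraints. Constraint (i) is genericity in the sense of Thurston's hyperbolic Dehn surgery theorem, so that $Y_{\vec r} := \mathbb{L}_n(r_0, \ldots, r_{n-1})$ is hyperbolic; this excludes only finitely many slopes per cusp. Constraint (ii) is the L-space condition on $Y_{\vec r}$, which I would try to verify via the Manolescu--Ozsv\'ath link surgery formula, or by filling cusps one at a time and applying the slope-detection machinery of Hanselman--Rasmussen--Rasmussen--Watson to the toroidal intermediate manifolds. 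Constraint (iii) is a matching condition that lets each cyclic translate extend: if $s_j$ denotes the boundary slope of $\phi$ at the cusp of $L_j$, then in order for $\sigma^i(\phi)$ to extend across the solid torus glued at $L_j$ via Fried--Goodman surgery, the slope $r_j$ must be compatible with $s_{j-i \bmod n}$. The denominator $4$ presumably reflects the denominator of the $s_j$. Under these constraints each $\phi_i$ extends to a pseudo-Anosov flow $\bar\phi_i$ on $Y_{\vec r}$.

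Third, I would prove pairwise non-orbit-equivalence of the $\bar\phi_i$ using the Barthelm\'e--Frankel--Mann reconstruction theorem cited in the abstract: since the flows have no perfect fits, each $\bar\phi_i$ is determined up to orbit equivalence by the set of free homotopy classes it realises as closed orbits, and the cyclic shift on the indices together with pairwise distinct $r_j$'s forces these orbit sets to differ. The absence of taut foliations then follows from the L-space property: co-orientable ones are obstructed by Ozsv\'ath--Szab\'o together with Eliashberg--Thurston and Kronheimer--Mrowka, and non-co-orientable ones are ruled out by passing to the orientation double cover and verifying it is again an L-space (or by a direct detection argument).

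The principal obstacle is the simultaneous satisfaction of constraints (ii) and (iii): pinning down an infinite family of slope tuples in $\bigl(\tfrac14\Z\bigr)^n$ that are generic enough to yield hyperbolic L-spaces yet structured enough that all $n$ cyclic translates of $\phi$ extend across the filling. I expect most of the paper's technical effort to lie in explicitly parametrising such a family, verifying the L-space property along it, and certifying absence of perfect fits so that the Barthelm\'e--Frankel--Mann reconstruction applies.
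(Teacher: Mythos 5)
Your high-level plan --- exploit the cyclic symmetry of $\mathbb{L}_n$ to produce $n$ flows via Fried surgery, verify hyperbolicity and the L-space condition along an infinite family of quarter-integer slope tuples, and rule out taut foliations by an odd-order $H_1$ / double-cover argument --- correctly anticipates the architecture of the paper's proof, and the taut-foliation step is essentially Lemma~\ref{lem:Lspacetaut} verbatim. But there are two substantive gaps.

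First, you never explain why the $n$ cyclic translates are pairwise orbit inequivalent after filling, and as written the claim would fail. You assert that ``the cyclic shift on the indices together with pairwise distinct $r_j$'s forces these orbit sets to differ,'' but this is false in general. If the degeneracy slope at every cusp were a multiple of the meridian $\mu_i$ --- as it is in the paper at every cusp except $L_0$ --- then filling at any quarter-integer slope $p_i/4$ gives a core orbit with $4\ell_i$ prongs independently of $p_i$, so distinct $r_j$'s alone do not separate the flows, and a rotational symmetry of the \emph{flow} would make all cyclic translates honestly orbit equivalent. The paper's construction is engineered precisely to break this: the fibration is taken on the complement of $\mathbb{L}_n'$, with the orientation of $L_0$ reversed, which forces the fractional Dehn twist coefficient at $B_0$ to be $-1/4$ rather than $0$ (Proposition~\ref{prop:fdtc}) and hence the degeneracy slope $d_0$ to have a nonzero $\lambda_0$-component while every other $d_i$ is meridional (Lemma~\ref{lem:degeneracy}). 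It is exactly this asymmetry that makes the prong count at the core $\gamma_0$ depend on the filling slope, and therefore on the cyclic shift $k$. Without identifying and exploiting this input, your constraint (iii) is a wish rather than a construction.

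Second, and relatedly, your Step~3 invokes Barthelm\'e--Frankel--Mann reconstruction, which the paper uses only for the contact-structure statement (Theorem~\ref{thm:UT}). For Theorem~\ref{thm:main} the flows are distinguished by a much more elementary orbit-equivalence invariant: the maximal number of prongs at a singular orbit. With $r_i = M^{i+1}/4$ for $M$ odd and large, the $k$th cyclic translate has a unique singular orbit $\gamma_0$ with $\ell_0(M^{k+1}+24)$ prongs, while every other singular orbit has at most $\max(4n+4,\,n+2) < M$ prongs; these maxima are pairwise distinct in $k$, and that is the whole argument. Routing through BFM would force you to show that the \emph{sets of free homotopy classes} of closed orbits differ, which is harder than what you would be avoiding. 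The remaining ingredients of your plan are fine: hyperbolicity via Thurston/Hodgson--Kerckhoff is the paper's Lemma~\ref{lem:hyperbolic}, and the L-space verification --- where you gesture at Manolescu--Ozsv\'ath or HRRW --- is handled more directly in the paper by Liu's theorem that $\mathbb{L}_n$ is an L-space link together with the rational-slopes extension proved in Appendix~\ref{sec:rational-surgeries}.
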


\begin{figure}[ht]
\begin{tikzpicture}
\begin{scope}
\clip (-3,-1.2) -- (3,-1.2) -- (3,-0.6) -- (2.75,-0.6) -- (2.75,0.8) -- (-2.75,0.8) -- (-2.75,-0.6) -- (-3,-0.6) -- cycle;
\draw[link,double=red] (0,0) ellipse (0.75 and 0.5) ++(0,-0.5) node[below,red] {$L_0$};
\foreach \i/\lname in {-2/2,-1/1,1/n-1,2/n-2} {
  \draw[link] (1.25*\i,0) ellipse (0.75 and 0.5) ++(0,-0.5) node[below,black] {$L_{\lname}$};
}
\foreach \i in {-2,-1,0,1} {
  \ifthenelse{\i=0}{\def\lcolor{red}}{\def\lcolor{black}}
  \draw[link, double=\lcolor] (1.25*\i,0) ++(0,-0.5) arc (270:360:0.75 and 0.5);
  \draw[link, double=\lcolor] (1.25*\i,0) ++(0,0.5) arc (90:180:0.75 and 0.5);
  \draw[line width=0.8pt, \lcolor, -Stealth] (1.25*\i,0) ++ (90:0.75 and 0.5) -- ++(-0.1,0);
}
\draw[line width=0.8pt, black, -Stealth] (1.25*2,0) ++ (90:0.75 and 0.5) -- ++(-0.1,0);
\end{scope}
\foreach \i in {-1,1} { \node at (\i*3.125,0) {$\cdots$}; }
\begin{scope}
\clip (3.5,0.8) -- (3.5,-0.8) -- (5,-0.8) -- (5,2.5) -- (-5,2.5) -- (-5,-0.8)  -- (-3.5,-0.8) -- (-3.5,0.8) -- cycle;
\foreach \i in {-1,1} {
  \draw[link] (3.75*\i, 0) ellipse (0.75 and 0.5);
  \draw[line width=0.8pt, black, -Stealth] (3.75*\i,0) ++ (90:0.75 and 0.5) -- ++(-0.1,0);
  \draw[link] (5*\i,0) ++(0,0.5) arc (90:90+180*\i:0.75 and 0.5);
  \draw[link] (5*\i,0) ++ (-1.25*\i,0) ++ (-120*\i:0.75 and 0.5) arc (-120*\i:90-90*\i:0.75 and 0.5);
}
\end{scope}
\draw[link] (5,0.5) to[out=0,in=0] ++(0,1) -- ++(-10,0) to[out=180,in=180] ++(0,-1);
\draw[link] (5,-0.5) to[out=0,in=0] ++(0,3) -- ++(-10,0) to[out=180,in=180] ++(0,-3);
\draw[line width=0.8pt, black, -Stealth] (0,1.5) -- ++(0.1,0);
\end{tikzpicture}
\caption{The chain link $\mathbb{L}_n$.}
\label{fig:chainlink}
\end{figure}
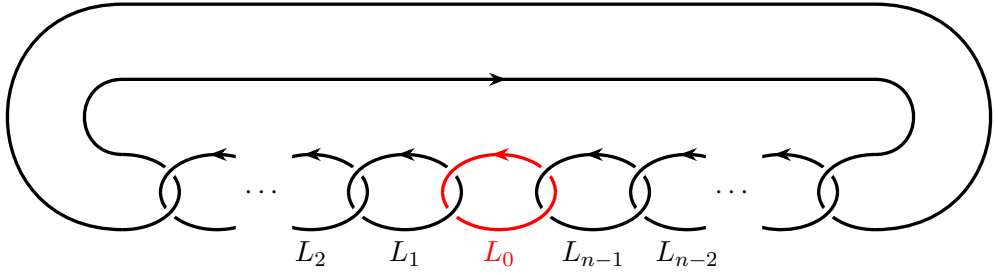

The link $\mathbb{L}_n$ is known to be hyperbolic \cite{neumann-reid} and an L-space link \cite{liu-lspace-links} for $n\geq 3$. The latter means that all sufficiently large integer surgeries on $\mathbb{L}_n$ are L-spaces; we show in \S\ref{sec:rational-surgeries} that large rational surgeries on L-space links are also L-spaces. In particular, infinitely many quarter-integer surgeries on $\mathbb{L}_n$ are hyperbolic L-spaces. Our pseudo-Anosov flows arise via Fried surgery on a fixed pseudo-Anosov flow in the complement of $\mathbb{L}_n$, corresponding to a certain fibered face of its Thurston unit norm ball. This flow has the property that its degeneracy slopes on the boundary tori are not preserved by the natural rotational symmetry of $\mathbb{L}_n$. We use this to find quarter-integer slopes for which the induced  flows on the Dehn surgery and its homeomorphic  rotations are distinguished by the number of prongs  at various closed orbits, and are thus orbit inequivalent. Behind the scenes is the  result of Shannon \cite{shannon} and its generalization as explained by Agol--Tsang \cite{agol-tsang}, which says that a transitive topological pseudo-Anosov flow has an orbit equivalent smooth model.

Some remarks are warranted about the claim that these quarter-integer L-space surgeries on $\mathbb{L}_n$ have no taut foliations. It is well-known that L-spaces do not admit \emph{co-orientable} taut foliations, but it is not as well-known that they can have taut foliations which are not co-orientable. Indeed, $56/3$-surgery on $P(-2,3,7)$ is a hyperbolic L-space with an Anosov flow, and hence a taut foliation. 

On the other hand, an L-space with odd-order first homology does not admit any taut foliation, co-orientable or otherwise: if it had a taut foliation that was not co-orientable, then some connected double cover would have a co-orientable taut foliation, but odd-order first homology implies that every double cover is disconnected. The rest of Theorem~\ref{thm:main} follows after noting that quarter-integer surgery on a link always has odd-order first homology.

This discussion inspires the question of whether there are closed hyperbolic 3-manifolds without \emph{co-orientable} taut foliations with arbitrarily many inequivalent Anosov flows, or similarly:

\begin{question}
Does there exist, for each $n\in \mathbb{N}$, a hyperbolic L-space with $n$ orbit inequivalent Anosov flows? Any such L-space must have even-order first homology by the discussion above.
\end{question}

In fact, the 3-manifolds with many inequivalent Anosov flows found in \cite{bby,bowden-mann,clay-pinsky} all have $b_1>0$, leading to the even simpler open question:

\begin{question}
Does there exist, for each $n\in \mathbb{N}$, a hyperbolic rational homology 3-sphere with $n$ orbit inequivalent Anosov flows? 
\end{question}

\subsection{Quasigeodesic flows and universally tight contact structures} Motivated by the close relationships between taut foliations and quasigeodesic flows, Calegari proposed the following question for the new K3 problem list: is there a closed hyperbolic 3-manifold with a quasigeodesic flow but no taut foliation?  In a different direction, Min and Nonino conjectured in \cite[Conjecture 1.1]{min-nonino} that no hyperbolic L-space admits a universally tight contact structure. 
The following theorem answers Calegari's question affirmatively and refutes Min--Nonino's conjecture:

\begin{theorem} \label{thm:knot}
Suppose that $K$ is a hyperbolic L-space knot of genus $g$, and $r=p/q$ is a rational number greater than $4g$. Then  $S^3_{r}(K)$ is a hyperbolic L-space which admits:
\begin{itemize}
\item a universally tight contact structure, and 
\item pseudo-Anosov flow with no perfect fits (hence, a quasigeodesic flow).
\end{itemize}
In particular, when $p$ is odd, $S^3_{r}(K)$ admits a quasigeodesic flow but no taut foliation.
\end{theorem}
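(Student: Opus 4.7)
The plan is to produce the pseudo-Anosov flow on $S^3_r(K)$ by performing Fried surgery on the suspension flow in the exterior $E_K = S^3 \setminus \nu(K)$, and then to feed this flow into the paper's contact-geometric and foliation machinery. Since $K$ is an L-space knot it is fibered (Ni, Ghiggini), and since $K$ is hyperbolic Thurston's classification of surface diffeomorphisms forces the monodromy $\phi$ of the fibration $E_K \to S^1$ to be pseudo-Anosov on a once-punctured genus $g$ surface. The L-space property of $S^3_r(K)$ for $r > 2g-1$ is Ozsv\'ath--Szab\'o's rational surgery formula for L-space knots, which is automatic from $r > 4g > 2g-1$. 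Hyperbolicity of $S^3_r(K)$ follows because all sufficiently large surgery slopes on a hyperbolic knot in $S^3$ yield hyperbolic fillings, with the exceptional slopes controlled by the genus; the cutoff $4g$ is designed to beat those bounds.

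Let $\varphi_t$ denote the suspension pseudo-Anosov flow on $E_K$ determined by $\phi$. On $\partial E_K$ this flow has a degeneracy slope $s$ determined by the fractional Dehn twist coefficient $c(\phi)$; known bounds on $c(\phi)$ (e.g.\ Kazez--Roberts) show that $|s|$ is well below $4g$ in the $(\mu,\lambda)$ surgery framing, so in particular $r \neq s$. Fried's construction then produces a pseudo-Anosov flow $\varphi_t^r$ on $S^3_r(K)$: the flow lines on $\partial E_K$ are collapsed to a single closed orbit of slope $r$, which becomes the core of the filling solid torus. Because $\varphi_t$ is a suspension flow its invariant laminations lift from the laminations of $\phi$ on the fiber $\Sigma$, which forces no perfect fits; this property is preserved by Fried surgery, so $\varphi_t^r$ also has no perfect fits. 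By Fenley's theorem on quasigeodesic pseudo-Anosov flows, a pseudo-Anosov flow with no perfect fits on a closed hyperbolic 3-manifold is quasigeodesic, handling the second bullet.

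For the universally tight contact structure, the fiber $\Sigma$ is a cross section of $\varphi_t$ and gives rise, after Fried surgery, to a positive Birkhoff section for $\varphi_t^r$ with boundary on the new closed orbit. The paper's main technical construction --- cylindrical contact homology together with Barthelm\'e--Frankel--Mann's reconstruction of a pseudo-Anosov flow from its closed orbits --- then yields a universally tight contact structure on $S^3_r(K)$ adapted to $\varphi_t^r$, delivering the first bullet.

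Finally, when $p$ is odd, $|H_1(S^3_r(K); \mathbb{Z})| = |p|$ is odd, so the L-space property combined with odd-order first homology precludes all taut foliations, co-orientable or not, exactly as explained after Theorem~\ref{thm:main}. The main obstacle I anticipate is the bookkeeping needed to verify that the degeneracy slope of $\varphi_t$ falls within the right range in the correct surgery framing, together with confirming that both the ``no perfect fits'' property and the existence of a positive Birkhoff section genuinely survive Fried surgery; once these are in hand, the four conclusions of the theorem line up as above.
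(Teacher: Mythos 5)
Your overall route matches the paper's proof of Theorem~\ref{thm:knot}: fiberedness and pseudo-Anosov monodromy from the hyperbolic L-space knot hypothesis, the suspension pseudo-Anosov flow on $E_K$, Fried surgery to get a flow on $S^3_r(K)$, hyperbolicity from exceptional-surgery bounds, the L-space property from $r>2g-1$, and no taut foliation when $p$ is odd from the odd-order $H_1$ argument. However, there are two substantive issues.

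First, the key quantitative step is left unverified and, as you've phrased it, is actually wrong. You write that bounds on the fractional Dehn twist coefficient give $r\neq s$ and that the no-perfect-fits property ``is preserved by Fried surgery.'' Neither is adequate: $r\neq s$ is not enough for Fried surgery to even produce a pseudo-Anosov flow (one needs $\Delta(d,r)\geq 2$), and no perfect fits does not pass through Fried surgery unconditionally --- it is the conclusion of Proposition~\ref{prop:surgery-no-perfect-fits}, which requires $\Delta(d,r)\geq 3$. The paper gets this by observing that the degeneracy slope is of the form $a\mu+b\lambda$ with $a/b\leq 4g-2$, so $r>4g$ forces $\Delta(d,r)\geq 3$. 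You flag this as an obstacle at the end, but the argument is incomplete without it, and the assertion ``preserved by Fried surgery'' would need to be replaced by the actual estimate.

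Second, you misattribute the source of universal tightness. Cylindrical contact homology together with Barthelm\'e--Frankel--Mann's reconstruction theorem are used in the paper (Proposition~\ref{prop:rsft}) to show that orbit-inequivalent flows give non-contactomorphic contact structures; they play no role in establishing tightness. Universal tightness comes from Corollary~\ref{cor:universallytight}: the Reeb flow built from the positive Birkhoff section via Zung's construction is hypertight, hypertightness implies tightness by Hofer, and hypertightness persists in finite covers. (The introduction also notes that for this specific theorem one can alternatively invoke Colin--Honda.) You should also check the sign of the Birkhoff section, or allow passing to $-S^3_r(K)$ to obtain the positive one as in Remark~\ref{rmk:posneg}.
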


This result has been known to the authors (and probably others) for some time, modulo Shannon's result above and Agol--Tsang's adaptation to the pseudo-Anosov case, but does not appear to have been widely appreciated. The existence of universally tight contact structures follows from work of Colin and Honda  \cite{colin-honda}. The result about flows comes from Fried surgery, together with Fenley's result \cite{fenley} that pseudo-Anosov flows with no perfect fits are quasigeodesic. 

The construction behind our Theorem \ref{thm:main} resolves more general variants of Calegari's question and Min and Nonino's conjecture. Indeed, the flows in our main theorem have no perfect fits, which immediately implies the following by Fenley's work:

\begin{theorem}
\label{thm:qg}
For each $n\in \mathbb{N}$, there are infinitely many closed hyperbolic 3-manifolds with no taut foliation, but with $n$ orbit inequivalent quasigeodesic pseudo-Anosov flows.
\end{theorem}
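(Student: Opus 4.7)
The plan is to deduce Theorem~\ref{thm:qg} directly from Theorem~\ref{thm:main}, combined with Fenley's result that a pseudo-Anosov flow with no perfect fits on a closed hyperbolic 3-manifold is quasigeodesic. Given $n \in \mathbb{N}$, I would first choose an even integer $N \geq \max(n,4)$. Theorem~\ref{thm:main} then produces infinitely many $N$-tuples $(r_0,\dots,r_{N-1}) \in \big(\frac{1}{4}\Z\big)^N$ for which $\mathbb{L}_N(r_0,\dots,r_{N-1})$ is a hyperbolic L-space with no taut foliation and with $N$ orbit inequivalent pseudo-Anosov flows. Since $N \geq n$, restricting attention to any $n$ of these flows on each such manifold already yields the required count, so the only remaining content is to upgrade these flows to quasigeodesic ones.

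The crucial additional input, explicitly flagged in the paragraph preceding Theorem~\ref{thm:qg}, is that the $N$ flows produced in the proof of Theorem~\ref{thm:main} have no perfect fits---a property that is inherited because they are all obtained via Fried surgery on a single pseudo-Anosov flow on the complement of $\mathbb{L}_N$ corresponding to a fibered face of its Thurston norm ball. Granted this, Fenley's theorem applies inside each hyperbolic L-space produced above and immediately promotes every one of the $N$ (hence the chosen $n$) pseudo-Anosov flows to a quasigeodesic pseudo-Anosov flow. Combined with the non-existence of taut foliations already built into Theorem~\ref{thm:main} (via odd-order first homology of quarter-integer surgeries together with the L-space condition), this yields the statement, and one gets infinitely many such manifolds because the parameter set in Theorem~\ref{thm:main} is infinite.

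The main obstacle therefore does not lie in the deduction of Theorem~\ref{thm:qg} itself---which is essentially a one-line combination of Theorem~\ref{thm:main} with Fenley's theorem---but rather in establishing that the Fried-surgered flows inherit the no-perfect-fits property from the ambient flow on the $\mathbb{L}_N$ complement for the chosen quarter-integer slopes. That substantive step is absorbed into the proof of Theorem~\ref{thm:main}, and is what makes the present corollary nontrivial; once it is in hand, no further work is required here.
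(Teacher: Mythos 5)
Your proposal is correct and takes essentially the same route as the paper: combine Theorem~\ref{thm:main} with Fenley's Theorem~\ref{thm:fenley}, and feed in the fact that the Fried-surgered flows have no perfect fits. The one imprecision worth flagging is your claim that the no-perfect-fits property is ``inherited because they are all obtained via Fried surgery'' and that this step is ``absorbed into the proof of Theorem~\ref{thm:main}.'' Fried surgery does not automatically preserve the no-perfect-fits property; one needs the surgery slopes to be distance at least~$3$ from the degeneracy slopes, which is exactly the hypothesis of Proposition~\ref{prop:surgery-no-perfect-fits}. That hypothesis is verified separately (Remark~\ref{rmk:int} records that $\Delta(r_i,d_i) \geq 4$ for the chosen slopes), and the conclusion of Proposition~\ref{prop:surgery-no-perfect-fits} is what actually delivers the no-perfect-fits statement---it is not part of the proof of Theorem~\ref{thm:main}, which only needs orbit inequivalence. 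So the deduction is right, but the precise provenance of the no-perfect-fits input should be cited as Proposition~\ref{prop:surgery-no-perfect-fits} together with Remark~\ref{rmk:int}, exactly as the paper does.
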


The pseudo-Anosov flows in Theorem \ref{thm:main}  also have negative Birkhoff sections. It follows that the corresponding flows on the orentation-reversed manifolds 
\[-\mathbb{L}_n(r_0,\dots,r_{n-1})\]
have positive Birkhoff sections, and thus give rise to universally tight contact structures by the work of the third author \cite{zung}. We show that these contact structures admit contact 1-forms whose Reeb flows are orbit equivalent to the corresponding pseudo-Anosov flows. Inspired by the proof of \cite[Theorem~1.10]{barthelme-mann}, we then use the invariance of cylindrical contact homology, together with the orbit inequivalence of our pseudo-Anosov flows, to argue that these contact structures are non-contactomorphic, and remain so in every finite cover, leading to:

\begin{theorem}
\label{thm:UT}
For each $n\in\mathbb{N}$, there are infinitely many hyperbolic L-spaces with $n$ universally tight contact structures whose lifts to any finite cover are pairwise non-contactomorphic.
\end{theorem}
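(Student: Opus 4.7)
The plan is to start from the $n$ pairwise orbit-inequivalent pseudo-Anosov flows $\phi_1,\dots,\phi_n$ on the L-space $Y = \mathbb{L}_n(r_0,\dots,r_{n-1})$ produced by Theorem~\ref{thm:main}. Since these flows admit negative Birkhoff sections, the reversed flows $\bar\phi_i$ on $-Y$ admit positive Birkhoff sections. I would then invoke the third author's construction \cite{zung}, which converts a pseudo-Anosov flow with a positive Birkhoff section into a universally tight contact structure $\xi_i$ on $-Y$. This produces $n$ candidate universally tight structures on each of the infinitely many hyperbolic L-spaces produced by Theorem~\ref{thm:main}.

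The next step is to arrange a contact $1$-form $\alpha_i$ for $\xi_i$ whose Reeb flow is orbit equivalent to $\bar\phi_i$. Reexamining the construction in \cite{zung}, the positive Birkhoff section makes a naturally chosen $1$-form positive along the flow direction; after rescaling so that its pairing with the flow vector field is constant along orbits, one obtains a contact form whose Reeb vector field is a reparametrization of $\bar\phi_i$'s generating vector field away from the binding and singular orbits. Near the pronged and binding orbits one needs a careful local model that smooths the construction while preserving orbit equivalence: this is one of the main technical steps.

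With the Reeb dynamics identified, I would distinguish the $\xi_i$ using cylindrical contact homology $\CH(-Y,\xi_i)$. Because Reeb orbits are (up to reparametrization) closed orbits of $\bar\phi_i$, the chain-level data encodes the periodic orbit spectrum together with its linking structure. A contactomorphism $(-Y,\xi_i)\xrightarrow{\sim} (-Y,\xi_j)$ induces an isomorphism on $\CH$, matching periodic orbits, multiplicities, and linking. Following the strategy of \cite{barthelme-mann}, the Barthelm\'e--Frankel--Mann reconstruction theorem, which recovers a pseudo-Anosov flow without perfect fits from the linking data of its closed orbits, then forces $\bar\phi_i$ and $\bar\phi_j$ to be orbit equivalent, contradicting Theorem~\ref{thm:main}.

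To extend to finite covers $\pi\colon\tilde Y \to -Y$, I would pull back the contact structures and flows and run the same argument upstairs. The invariants involved (prong counts at closed orbits and their linking with specified orbits) are local and lift to $\tilde Y$, so the distinguishing data for the $\bar\phi_i$ persists in $\tilde Y$; a contactomorphism $(\tilde Y,\pi^*\xi_i)\simeq(\tilde Y,\pi^*\xi_j)$ would then, by $\CH$-invariance and Barthelm\'e--Frankel--Mann applied on the cover, force the lifted flows to be orbit equivalent, violating the lifted version of the downstairs obstruction. The main obstacle throughout is the first step above — namely, promoting the contact structure coming from \cite{zung} to one whose Reeb flow is genuinely orbit equivalent to the ambient pseudo-Anosov flow — since everything else is then a relatively formal invocation of $\CH$-functoriality together with the reconstruction theorem.
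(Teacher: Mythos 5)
Your proposal follows essentially the same approach as the paper: invert the orientation to obtain positive Birkhoff sections, apply \cite{zung} to produce universally tight contact structures (Corollary~\ref{cor:universallytight}), relate the Reeb dynamics to the pseudo-Anosov dynamics, and distinguish the contact structures via cylindrical contact homology together with the Barthelm\'e--Frankel--Mann reconstruction theorem (Proposition~\ref{prop:rsft}), handling finite covers by the same argument.

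One subtlety worth flagging, since you single it out as ``the main obstacle.'' You plan to promote the construction from \cite{zung} to one whose Reeb flow is \emph{genuinely orbit equivalent} to the pseudo-Anosov flow, and you anticipate needing a local model near the pronged/binding orbits that preserves orbit equivalence. In fact, this cannot quite be arranged in the present setting: the positive Birkhoff section may have singular orbits on its boundary, and the ``nonrotating blowup'' in \cite{zung} that cancels the extraneous hyperbolic Reeb orbits near a $k$-pronged orbit requires that orbit to lie in the interior of the Birkhoff section---moving it there would destroy positivity of the section. The paper therefore does not claim orbit equivalence of the Reeb flow with $\bar\phi_i$; instead, Proposition~\ref{prop:contactform} establishes the weaker but sufficient statement that for each primitive free homotopy class $[\gamma]$, the Reeb flow has closed orbits in $[\gamma]$ exactly when $\bar\phi_i$ does, with nonzero signed count of Lefschetz indices. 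This lets one read off $\spectrum(\bar\phi_i)$ from $\CH(\alpha_i)$ (via a nonvanishing Euler characteristic in each such class), which is all the BFM theorem needs. So your step ``preserving orbit equivalence near the pronged/binding orbits'' would not go through as stated, but the overall strategy survives once you replace it with the Lefschetz-index bookkeeping. Relatedly, the way $\CH$ distinguishes the contact structures is purely through which primitive free homotopy classes carry nontrivial $\CH$---not through additional ``multiplicities and linking'' data at the level of $\CH$ itself; the linking data enters only inside the statement of the BFM theorem.
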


\begin{remark}
In \cite{bowden-mann}, Bowden and Mann construct hyperbolic 3-manifolds with arbitrarily many pairwise distinct Reeb Anosov flows. These flows are distinguished by the set $\mathcal P(\phi)$ of free homotopy classes of the closed orbits of a flow $\phi$.  This invariant is well-behaved under finite covers because if $G \subset \pi_1(Y)$ is a finite-index subgroup then $P(\phi) \cap G$ determines $P(\phi)$, as a consequence of \cite[Lemma~4.2]{zung}.  Barthelm\'e and Mann \cite{barthelme-mann} showed that $\mathcal P(\phi)$ determines $\phi$ up to \emph{isotopy equivalence}  (i.e., orbit equivalence via a homeomorphism isotopic to the identity), so it follows that the Anosov flows from \cite{bowden-mann} remain isotopically inequivalent in finite covers; then they and Bowden use cylindrical contact homology to show that the underlying contact structures are non-isotopic, and by the same argument they remain so in finite covers.  On the other hand, since $G$ might have more symmetries than $\pi_1(Y)$, it is not clear whether the flows remain orbit inequivalent in finite covers.
\end{remark}

Colin, Giroux, and Honda \cite{colin-giroux-honda} proved that a closed, atoroidal 3-manifold admits only finitely many mutually non-isotopic tight contact structures.  We remark that in combination with the work needed to prove Theorem~\ref{thm:UT}, this result leads to an unconditional proof of \cite[Theorem~B]{zung} (and subsequently also \cite[Theorem~C]{zung}) for atoroidal rational homology 3-spheres, without relying on the foundations of symplectic field theory:

\begin{theorem} \label{thm:finite-birkhoff-section}
Let $M$ be a closed, oriented, atoroidal rational homology 3-sphere.  Then there are only finitely many pseudo-Anosov flows on $M$ that admit positive Birkhoff sections, up to orbit equivalence via a homeomorphism isotopic to the identity.
\end{theorem}

Lens spaces admit both positive and negative universally tight contact structures, but it would be interesting, especially in light of Massoni's recent work \cite{massoni}, to answer the following modification of Min and Nonino's question:

\begin{question}
Is there a hyperbolic L-space with both positive and negative universally tight contact structures? 
\end{question}

One could further ask whether there are hyperbolic L-spaces with positive and negative universally tight contact structures in the same homotopy class, or with arbitrarily many universally tight contact structures of each sign.

\subsection{Organization}
In \S\ref{sec:prelim}, we recall some basic notions and results involving pseudo-Anosov flows. We also extend the third author's previous work to prove that orbit inequivalent pseudo-Anosov flows with positive Birkhoff sections on a hyperbolic rational homology 3-sphere give rise to non-contactomorphic universally tight contact structures (see Propositions \ref{prop:contactform} and \ref{prop:rsft}), and establish Theorem~\ref{thm:finite-birkhoff-section} along the way. In \S\ref{sec:proofs}, we prove the other results stated in this introduction.

\section{Preliminaries} \label{sec:prelim}
We first review some basic notions and results about pseudo-Anosov flows. We then discuss and prove some results relating pseudo-Anosov flows and contact structures. Finally, we narrow our focus to pseudo-Anosov flows on Dehn surgeries on fibered hyperbolic links.
\subsection{Basic notions}
Two flows on a 3-manifold $M$ are \emph{orbit equivalent} if there is a homeomorphism of $M$ sending orbits of one flow to orbits of the other, preserving the orientations of the flowlines but not necessarily the parametrizations of the flows.

An \emph{Anosov flow} on a 3-manifold $M$ is a smooth flow $\phi^t$ which preserves a continuous splitting of the tangent bundle $TM = E^s\oplus E^u \oplus X$ such that $d\phi^t/dt$ spans $X$, and the time-$t$ flow uniformly contracts $E^s$ and uniformly expands $E^u$. That is, there exist constants $C,k>0$ such that
\[ |D\phi^t (v)| \leq C e^{-kt} |v| \]
for any $v\in E^s$ and
\[ |D\phi^t (v)| \geq C e^{kt} |v| \]
for any $v\in E^u$, for some Riemannian metric on $M$. 

A \emph{pseudo-Anosov flow} is a flow which is Anosov except at finitely many singular orbits, where it is locally modelled on the $\frac k 2$-fold branched cover over an orbit of an Anosov flow for some $k\geq 3$. We say that the flow has a \emph{$k$-prong singularity} near such an orbit.

One may also define the weaker notion of a \emph{topological Anosov flow}. We say that a flow $\phi^t$ on a 3-manifold $M$ is a topological Anosov flow if:
\begin{enumerate}
	\item the flow lines $t \mapsto \phi^t(x)$ are $C^1$ and non-constant,
	\item there exist transverse (possibly non-orientable) codimension 1 foliations $\mathscr F^s$ and $\mathscr F^u$ of $M$ preserved by the flow of $\phi$,
	\item and flowlines in leaves of $\mathscr F^s$ converge in forward time and flowlines in leaves of $\mathscr F^u$ converge in backward time.
\end{enumerate}
A \emph{topological pseudo-Anosov flow} is a flow satisfying the same properties as a topological Anosov flow, except that $\mathscr F^s$ and $\mathscr F^u$ are permitted to have singularities at finitely many orbits $\{\gamma_i\}$. We require that $\mathscr F^s$ and $\mathscr F^u$ each have $k_i \geq 3$ half-leaves meeting along each singular orbit $\gamma_i$. We call $k_i$ the number of \emph{prongs of $\phi$ at $\gamma_i$}.

A flow is said to be \emph{transitive} if it has a dense orbit. Every pseudo-Anosov flow is a topological pseudo-Anosov flow---the plane fields $E^s\oplus X$ and $E^u\oplus X$ are integrable (away from singular orbits) and integrate to the foliations $\mathscr F^s$ and $\mathscr F^u$. Conversely, every transitive topological pseudo-Anosov flow is orbit equivalent to a pseudo-Anosov flow; see the recent work of Shannon for the Anosov case \cite{shannon}, and the discussion in \cite[\S5.5]{agol-tsang} for the adaptations necessary in the pseudo-Anosov case. This fact will be useful since Fried surgery, described below, a priori produces only a topological pseudo-Anosov flow. On the other hand, topological pseudo-Anosov flows lack the smoothness and structural stability properties which are needed for the results of \cite{zung}, which we will apply in order to prove Propositions \ref{prop:contactform} and \ref{prop:rsft} relating pseudo-Anosov flows and contact structures.

Suppose $\phi$ is a topological pseudo-Anosov flow on $M$. A closed disk $R$ immersed in $Y$ is called a \emph{rectangle} if it is transverse to $\phi$ and the induced foliations $\mathscr F^s|_R$ and $\mathscr F^u|_R$ form a product foliation. A rectangle may intersect singular orbits on its boundary, but not in its interior. An \emph{admissible homotopy} of a rectangle is a homotopy which moves points along flowlines of $\phi$. We say that $R_1$ is \emph{contained} in $R_2$ if there is an admissible homotopy taking $R_1$ into a subrectangle of $R_2$. Finally, we say that $\phi$ has \emph{no perfect fits} if every ascending chain of rectangles $R_1 \subset R_2 \subset \dots$ has an upper bound---i.e., a rectangle $R$ containing every element of the chain. The following result of Fenley \cite[Theorem~F]{fenley} indicates the importance of this notion:

\begin{theorem}[{\cite{fenley}}] \label{thm:fenley}
A topological pseudo-Anosov flow with no perfect fits is quasigeodesic.
\end{theorem}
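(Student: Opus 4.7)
The plan is to lift everything to the universal cover, use the no-perfect-fits hypothesis to constrain the large-scale behavior of orbits, and argue that each flow line is a uniform quasigeodesic in the hyperbolic metric on $\widetilde{M} \cong \mathbb{H}^3$. First I would pass to the orbit space $\mathcal{O} = \widetilde{M}/\widetilde{\phi}$, a simply-connected (possibly non-Hausdorff) topological surface carrying two transverse singular $1$-dimensional foliations $\mathcal{O}^s$ and $\mathcal{O}^u$ induced by $\mathscr{F}^s$ and $\mathscr{F}^u$. The no-perfect-fits hypothesis translates into a structural statement in $\mathcal{O}$: there is no pair consisting of a stable half-leaf and an unstable half-leaf that fail to meet yet cobound an open product-foliated strip.

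Next I would construct an ideal boundary $\partial \mathcal{O}$ by compactifying each leaf (homeomorphic to $\mathbb{R}$) with two endpoints and gluing endpoints according to the combinatorics of the transverse foliation; the absence of perfect fits is precisely what makes the resulting object tractable. Identifying $\widetilde{M} \cong \mathbb{H}^3$, I would show that the map $\partial \mathcal{O} \to S^2_\infty$ sending each leaf-endpoint to the corresponding accumulation point in the sphere at infinity is well-defined and $\pi_1(M)$-equivariant: the two-dimensional stable/unstable leaves, being exponentially contracted or expanded by the flow, must limit to closed subsets of $S^2_\infty$, and absence of perfect fits forces these limit sets to pinch to points at the proper leaf-ends.

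The core dynamical step is to show that each orbit of $\widetilde{\phi}$ has two distinct endpoints on $S^2_\infty$, depending continuously on the orbit. Suppose for contradiction that some orbit fails to be quasigeodesic, or that two orbits share an endpoint. Then a flow-line arc of hyperbolic length $L$ would stray arbitrarily far from its chord as $L \to \infty$; I would extract a subsequential limit of the $\pi_1(M)$-translates, exploiting the compactness of $M$ and the hyperbolicity of $\widetilde{\phi}$ on its local product structure, to produce a bi-infinite configuration of stable/unstable half-leaves in $\mathcal{O}$ which, by a shadowing argument, realizes a perfect fit — contradicting the hypothesis. Once distinctness of endpoints is established uniformly, a Morse-lemma argument upgrades it to uniform quasigeodesic constants $(K,C)$ valid for every flow line.

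The main obstacle will be controlling the behavior at the ideal boundary in the presence of non-Hausdorff branching in $\mathcal{O}$ and the $k$-prong singularities of $\mathscr{F}^{s,u}$. Concretely, one must rule out that a sequence of orbits could accumulate on a non-Hausdorff pair of orbits in $\mathcal{O}$ whose endpoints on $S^2_\infty$ coincide; this is exactly where the bookkeeping of lozenges, chains of lozenges, and master sequences developed by Fenley is indispensable. Assembling these tools into the shadowing argument above, while verifying that none of the combinatorial possibilities yield a perfect-fit-free example of bad asymptotic behavior, is the technical heart of the proof.
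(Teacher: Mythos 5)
The paper does not prove this statement: it is cited verbatim as Theorem~F of Fenley \cite{fenley} and used as a black box, so there is no in-paper argument to compare against. Your sketch does gesture at the genuine machinery in Fenley's proof---the orbit space $\mathcal{O}$ of the lifted flow, its two singular foliations, an ideal compactification of $\mathcal{O}$, and the bookkeeping of lozenges and non-Hausdorff branching---so the overall shape is not unreasonable. But a few points deserve caution. First, quasigeodesity only makes sense once you have fixed a hyperbolic (or at least atoroidal) ambient manifold; the theorem statement leaves this implicit, and your identification $\widetilde M \cong \mathbb{H}^3$ should be flagged as a hypothesis rather than a consequence. Second, the step where ``absence of perfect fits forces the limit sets to pinch to points at the proper leaf-ends'' is the heart of the matter and is asserted rather than argued; in Fenley's treatment this requires the full development of the flow ideal boundary and a continuous, equivariant comparison map to the Gromov boundary, and proving that this map is well-defined and injective on relevant data is itself a long argument. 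Third, the contradiction you aim for---that a failure of the quasigeodesic property shadows out a perfect fit---is where most of the work lives; it is not a one-step extraction, but rather a delicate analysis of how chains of lozenges and non-separated leaves interact with orbit asymptotics, and one must rule out several combinatorial alternatives, not just ``produce a perfect fit.'' In short, the outline identifies the right objects, but at the level of detail given it does not yet constitute a proof, and one cannot assess whether the shadowing step actually closes without engaging the lozenge calculus in earnest. For the purposes of this paper, it is enough to cite Fenley's Theorem~F.
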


A \emph{Birkhoff section} for a flow $\phi$ on $M$ is a compact, oriented surface in $M$ whose interior is embedded and transverse to $\phi$, and whose boundary consists of closed orbits of $\varphi$ (possibly winding around orbits multiple times, and with forwards or backwards orientation), and which intersects every orbit of $\phi$ in forwards and backwards time. A Birkhoff section is \emph{positive} (resp.~\emph{negative}) if all of its boundary components are oriented with (resp.~against) the flow. 

\begin{remark}
\label{rmk:posneg}
If a flow $\varphi$ on $M$ has a negative Birkhoff section $F$, then $-F$ is a positive Birkhoff section for the corresponding flow on $-M$.
\end{remark}

\subsection{Flows and contact structures}
\label{ssec:tight}

A positive Birkhoff section for a pseudo-Anosov flow $\phi$ on $M$ gives rise to a rational open book decomposition \cite{bevhm}, and hence to a uniquely defined isotopy class of contact structures on $M$ \cite[Theorem~1.7]{bevhm}. The next proposition says that as long as a homological obstruction vanishes, we can choose a contact form for this contact structure so that the Reeb flow has essentially the same dynamics as $\phi$.

\begin{proposition}\label{prop:contactform}
Suppose that $\phi$ is a pseudo-Anosov flow on a rational homology 3-sphere $M$ with a positive Birkhoff section, and let $\xi$ be the contact structure associated with the Birkhoff section. Then $\xi$ has a contact form $\alpha$ whose Reeb flow $R_\alpha$ satisfies the following properties.
\begin{enumerate}
	  \item Every orbit of $R_\alpha$ representing a primitive homotopy class is nondegenerate, and $R_\alpha$ has no contractible orbits. \label{i:reeb-nondegenerate}
	  \item For each primitive homotopy class $[\gamma] \subset \pi_1(M)$ not represented by a closed orbit of $\phi$, $R_\alpha$ has no closed orbits in $[\gamma]$. \label{i:primitive-orbit-classes}
	  \item For each primitive homotopy class $[\gamma] \subset \pi_1(M)$ represented by a closed orbit of $\phi$, the signed count of representatives of $[\gamma]$ among closed orbits of $R_\alpha$ is nonzero. Here, the sign of a closed orbit of $R_\alpha$ is its Lefschetz index. \label{i:gamma-signed-count}
	  \item $R_\alpha$ is compatible with the given Birkhoff section, i.e., it is transverse to the pages of the Birkhoff section and tangent to its boundary. \label{i:birkhoff-compatible}
\end{enumerate}
If $\pi: \tilde{M} \to M$ is any finite cover, then the same holds for the lifted pseudo-Anosov flow $\tilde\phi = \pi^*\phi$ and the associated contact structure $\tilde\xi$.
\end{proposition}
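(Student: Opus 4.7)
The plan is to invoke the third author's construction of contact forms compatible with Birkhoff sections from \cite{zung}, then perform a small perturbation to achieve nondegeneracy of primitive orbits, and finally pull back the result to a finite cover.

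First I would apply the main construction of \cite{zung} to produce, from $\phi$ and the positive Birkhoff section $F$, a smooth contact $1$-form $\alpha_0$ for $\xi$ that is compatible with the Birkhoff section and whose Reeb flow $R_{\alpha_0}$ is orbit equivalent to $\phi$. The assumption that $M$ is a rational homology sphere enters precisely here, to trivialize the cohomological obstructions to extending the local contact-form model near the binding globally over $M$ so that it matches up with the smooth pseudo-Anosov dynamics away from the binding. This output already delivers \eqref{i:birkhoff-compatible}, and it reduces \eqref{i:primitive-orbit-classes} and \eqref{i:gamma-signed-count} to statements about $\phi$ itself. Property \eqref{i:primitive-orbit-classes} is then immediate, since the orbit equivalence supplies a free homotopy class preserving bijection between closed orbits of $R_{\alpha_0}$ and of $\phi$. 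For \eqref{i:gamma-signed-count} one uses that closed orbits of a smooth pseudo-Anosov flow are hyperbolic with Lefschetz index $\pm 1$ determined by the orientability data of the first-return map, together with the fact that within a fixed primitive free homotopy class these signs are uniform, so the signed count equals the unsigned count of orbits of $\phi$ in $[\gamma]$ up to an overall sign and is nonzero.

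To upgrade $\alpha_0$ to an $\alpha$ satisfying \eqref{i:reeb-nondegenerate}, I would apply a $C^\infty$-small contact perturbation $\alpha=\alpha_0+\epsilon\beta$ that nondegenerates every orbit representing a primitive homotopy class while preserving the above structure. This perturbation step is the step I expect to be the main technical obstacle, since one must simultaneously nondegenerate countably many primitive orbits without creating new orbits in those classes, changing Lefschetz signs, or introducing contractible orbits. The key point is that the hyperbolic orbits of $R_{\alpha_0}$ inherited from $\phi$ are stable under $C^1$-small perturbations and their Lefschetz indices and homotopy classes are invariants of the local dynamics, so a standard genericity argument carried out in a neighborhood of each primitive orbit suffices. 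The absence of contractible Reeb orbits required by \eqref{i:reeb-nondegenerate} follows from the fact that $\phi$ itself has no contractible closed orbits, a property that is stable under small perturbations of the vector field.

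For the lift to a finite cover $\pi\colon \tilde M \to M$, the pullback $\tilde\alpha=\pi^*\alpha$ is a contact form for $\tilde\xi=\pi^*\xi$ whose Reeb vector field is the lift of $R_\alpha$, and $\pi^{-1}(F)$ is a positive Birkhoff section for $\tilde\phi$. A closed orbit of the lifted Reeb flow in a primitive free homotopy class of $\tilde M$ projects to a closed orbit of $R_\alpha$ (possibly wrapping several times), and the correspondence between closed orbits of $\tilde R_{\tilde\alpha}$ and of $\tilde\phi$ obtained by pullback is again a homotopy class preserving bijection. Because nondegeneracy, Lefschetz indices, and the non-existence of contractible orbits all transfer under finite coverings, properties \eqref{i:reeb-nondegenerate}--\eqref{i:birkhoff-compatible} hold for $\tilde\alpha$ verbatim.
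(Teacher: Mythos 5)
Your proposal misidentifies the structure of Zung's construction in a way that introduces a genuine gap. The output of \cite[\S3.3]{zung} is not a contact form but a \emph{stable Hamiltonian structure} $(\omega_0,\lambda_0)$, and the Reeb flow $R_{\omega_0,\lambda_0}$ is emphatically \emph{not} orbit equivalent to $\phi$: the construction inserts additional Reeb orbits in the free homotopy classes of the binding, and after the exact homotopy of \cite[Lemma~4.4]{zung} one typically has \emph{several} closed Reeb orbits (a mix of one elliptic and some number of hyperbolic orbits) in a class where $\phi$ has only one. Because of this, your reduction of items \eqref{i:primitive-orbit-classes} and \eqref{i:gamma-signed-count} via an ``orbit equivalence'' between $R_{\alpha_0}$ and $\phi$ does not go through. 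Item \eqref{i:gamma-signed-count} in particular requires tracking the Lefschetz indices of \emph{all} the Reeb orbits created in each class, not just importing the index of the single $\phi$-orbit; the paper handles this by a case analysis (including the delicate case of a nonrotating $k$-pronged orbit on the boundary of the Birkhoff section, where the conclusion of Lemma~4.4 fails verbatim and one must check the signed count is still nonzero by hand).

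You also misattribute the role of the rational homology sphere hypothesis and skip the step that actually uses it. In the paper, $H^1(M)=0$ is used not to ``trivialize obstructions near the binding'' but to produce a global primitive $\eta$ with $d\eta=\omega$, so that $\alpha=\lambda+\varepsilon\eta$ can be shown to be a genuine contact form with the same Reeb direction as $R_{\omega,\lambda}$ (using positivity of the Birkhoff section to write $d\lambda=f\omega$ with $f\ge 0$). This conversion from a stable Hamiltonian structure to a contact form is a real step in the argument and your proposal has no analogue of it. Finally, the nondegeneracy in \eqref{i:reeb-nondegenerate} is already built into Zung's construction rather than being arranged afterwards; your plan to achieve it by a $C^\infty$-small post-hoc perturbation is the sort of thing one would need to prove carefully precisely because such perturbations can create new orbits, and you yourself flag this as the main technical obstacle without resolving it. Your treatment of the finite cover is in the right spirit, but the point that must actually be checked is that $\tilde\omega$ is exact on $\tilde M$ (so the primitive $\eta$ exists upstairs), which is not automatic from $b_1(\tilde M)$ but does follow from exactness of $\omega$ on $M$ and naturality of the exact homotopy.
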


\begin{proof}
	In \cite[\S3.3]{zung}, Zung constructs a stable Hamiltonian structure $(\omega_0,\lambda_0)$ whose Reeb flow $R_{\omega_0, \lambda_0}$ satisfies \eqref{i:reeb-nondegenerate}, \eqref{i:primitive-orbit-classes}, and \eqref{i:birkhoff-compatible}. (This construction assumes that $\phi$ is transitive, but only in order to conclude that $\varphi$ has a Birkhoff section, which is true here by hypothesis.)  In \cite[Lemma 4.4]{zung} a small exact homotopy is performed, resulting in a stable Hamiltonian structure $(\omega,\lambda)$ whose Reeb flow achieves \eqref{i:gamma-signed-count} while maintaining \eqref{i:reeb-nondegenerate}, \eqref{i:primitive-orbit-classes}, and \eqref{i:birkhoff-compatible}. Since our Birkhoff section may have singular orbits on the boundary, contra the assumption in \cite[\S3.3]{zung}, the conclusion of \cite[Lemma 4.4]{zung} does not hold verbatim.  Instead, the argument shows that for any primitive free homotopy class $[\gamma] \subset \pi_1(M)$ represented by a closed orbit of $\phi$, one of the following possibilities holds:
	\begin{enumerate}[label=(\alph*)]
		\item $R_{\omega,\lambda}$ has a number of hyperbolic orbits representing $[\gamma]$, all of the same Lefschetz index.
		\item $R_{\omega,\lambda}$ has one elliptic orbit representing $[\gamma]$, and no hyperbolic orbits.
		\item $[\gamma]$ is represented by a $k$-pronged orbit such that the first return map does not rotate the prongs. In this free homotopy class, $R_{\omega,\lambda}$ has one elliptic orbit and at least $k\geq 2$ hyperbolic orbits, all of the same Lefschetz index. In particular, the sum of their Lefschetz indices is nonzero. \label{i:multipleorbits}
	\end{enumerate}
	But in any of these cases, $R_{\omega,\lambda}$ satisfies \eqref{i:gamma-signed-count} as desired.
\begin{remark}
In case \ref{i:multipleorbits}, \cite{zung} further cancels the single elliptic orbit with one of the hyperbolic orbits using a ``nonrotating blowup''. However, this requires the $k$-pronged orbit to lie on the interior of the Birkhoff section, and we cannot guarantee this while maintaining positivity of the Birkhoff section.  Since we want the Birkhoff section to remain positive, we leave these orbits as they are.
\end{remark}

We now show that one can perturb $\lambda$ to a contact form $\alpha$ without changing the Reeb flow. Since we started with a positive Birkhoff section, $\lambda$ is almost a contact form in the sense that $\lambda \wedge d\lambda \geq 0$.  Since $\ker(\omega) \subset \ker(d\lambda)$ we can write $d\lambda = f\omega$ for some $f: M \to \R$, and then the nonnegativity of $\lambda \wedge d\lambda = f(\lambda \wedge \omega)$ implies that $f \geq 0$. Since $H^1(M) = 0$ we can fix a primitive $\eta$ for $\omega$, and we consider the 1-form $\alpha = \lambda + \varepsilon \eta$ where $\varepsilon > 0$ is small. Note that
\begin{align*}
\alpha \wedge d\alpha &= (\lambda+\varepsilon\eta) \wedge (d\lambda + \varepsilon \omega) \\
&= (\lambda+\varepsilon\eta) \wedge (f+\varepsilon)\omega \\
&= (f+\varepsilon)\big( \lambda\wedge\omega + \varepsilon(\eta\wedge\omega) \big),
\end{align*}
which is a volume form for small enough $\varepsilon > 0$ because $f+\varepsilon > 0$ and $\lambda\wedge\omega > 0$.  We also have
\[ R_{\omega,\lambda} \in \ker(d\alpha) = \ker(d\lambda + \varepsilon\omega), \]
and if $\varepsilon > 0$ is again small then
  \begin{equation*}
    \alpha(R_{\omega, \lambda}) = \lambda(R_{\omega, \lambda}) + \varepsilon\cdot\eta(R_{\omega, \lambda})>0.
  \end{equation*}
  Together, these facts imply that $\alpha$ is a contact form with Reeb flow parallel to $R_{\omega,\lambda}$. Since $R_\alpha$ is compatible with the same Birkhoff section that defines $\xi$, the form $\alpha$ moreover defines the same contact structure as $\xi$ up to isotopy, completing the proof in this case.

Now if $\tilde\phi$ is the pullback of $\phi$ to some finite cover $\tilde{M} \to M$, then we can lift the positive Birkhoff section on $M$ to one on $\tilde{M}$ and then attempt to carry out the above construction verbatim for $\tilde\phi$.  The only place where we used the hypothesis $b_1(M)=0$ was in constructing a primitive $\eta$ for $\omega$, so it will suffice to show here that $\tilde\omega$ is exact.  The respective constructions on $\tilde{M}$ and $M$ start with stable Hamiltonian structures $(\tilde\omega_0,\tilde\lambda_0)$ and $(\omega_0,\lambda_0)$ constructed using the respective Birkhoff sections; the closed 2-form $\omega_0$ is exact since $b_1(M)=0$, so its pullback $\tilde\omega_0$ is exact as well.  The desired $(\tilde\omega, \tilde\lambda)$ is then obtained from $(\tilde\omega_0,\tilde\lambda_0)$ by an exact homotopy, which means that $[\tilde\omega] = [\tilde\omega_0] = 0$ and hence that $\tilde \omega$ is exact, and we can repeat the rest of the argument for $\tilde\phi$ as claimed.
\end{proof}

The Reeb flow we constructed in Proposition \ref{prop:contactform} has no contractible orbits---in other words, the contact structure $\xi$ is \emph{hypertight}.  Hypertight contact structures are tight \cite[Theorem~1]{hofer}, and their finite covers are again hypertight, so we obtain:
\begin{corollary} \label{cor:universallytight}
	The contact structure associated with a positive Birkhoff section of a pseudo-Anosov flow as in Proposition~\ref{prop:contactform} is universally tight.
\end{corollary}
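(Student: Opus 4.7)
The plan is to chain together the three ingredients highlighted in the paragraph immediately preceding the corollary: Proposition~\ref{prop:contactform} produces a contact form $\alpha$ for $\xi$ whose Reeb flow has no contractible closed orbits (i.e.\ $\alpha$ is a hypertight contact form); Hofer's theorem converts hypertightness into tightness; and the construction of Proposition~\ref{prop:contactform} already comes packaged with its own finite-cover version, so the same argument runs on every finite cover of $M$.

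Concretely, I would first apply Proposition~\ref{prop:contactform}\eqref{i:reeb-nondegenerate}: the Reeb flow $R_\alpha$ has no contractible closed orbits, so by definition $\alpha$ is a hypertight contact form representing $\xi$. Since $M$ is closed, \cite[Theorem~1]{hofer} then shows $\xi$ is tight. Next, let $\pi\colon \tilde M \to M$ be any finite cover and set $\tilde\xi = \pi^*\xi$ and $\tilde\alpha = \pi^*\alpha$. The final sentence of Proposition~\ref{prop:contactform} guarantees that its conclusions also hold for the lifted flow $\tilde\phi = \pi^*\phi$ and contact structure $\tilde\xi$, so $\tilde\xi$ admits a hypertight contact form as well. (Equivalently, one can observe directly that any closed orbit of $R_{\tilde\alpha}$ projects to a closed orbit of $R_\alpha$, and a null-homotopy in $\tilde M$ would push down to a null-homotopy in $M$; so $\tilde\alpha$ itself is hypertight.) Since $\tilde M$ is again a closed $3$-manifold, \cite[Theorem~1]{hofer} shows that $\tilde\xi$ is tight.

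As this holds for every finite cover, $\xi$ is universally tight; if one prefers the formulation in terms of the universal cover, a standard compactness argument (any overtwisted disk lies in a compact subset, which embeds in some finite cover) reduces tightness of the universal cover to tightness of every finite cover. The corollary is thus essentially a formal consequence of Proposition~\ref{prop:contactform}, and I do not anticipate any real obstacle: all of the substantive work has already been done in constructing the hypertight contact form with its compatible Reeb dynamics.
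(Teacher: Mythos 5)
Your proof follows the paper's argument exactly: Proposition~\ref{prop:contactform}\eqref{i:reeb-nondegenerate} gives a hypertight contact form, Hofer's theorem yields tightness, and the finite-cover clause of Proposition~\ref{prop:contactform} (or, equivalently, the observation that contractible Reeb orbits upstairs project to contractible orbits downstairs) shows the lifts remain hypertight, hence tight. The extra remark on reducing tightness of the universal cover to tightness of finite covers via compactness and residual finiteness of $\pi_1$ is a reasonable clarification the paper leaves implicit, but it is not a substantively different route.
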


The proof of the next proposition follows the pattern of \cite[Theorem~1.10]{barthelme-mann}, which deals with the case of skew-Anosov flows.
\begin{proposition}\label{prop:rsft}
 Suppose $\phi_1$ and $\phi_2$ are pseudo-Anosov flows with positive Birkhoff sections on an atoroidal rational homology 3-sphere $M$. Let $\xi_1$ and $\xi_2$ be the associated contact structures.
  \begin{enumerate}
    \item If $\xi_1$ and $\xi_2$ are isotopic, then $\phi_1$ and $\phi_2$ are orbit equivalent via a homeomorphism isotopic to the identity.\label{i:claim2}
    \item More generally, let $\tilde\xi_i$ and $\tilde\phi_i$ be lifts of $\xi_i$ and $\phi_i$ to a finite cover of $M$, for $i=1,2$. If $\tilde\xi_1$ and $\tilde\xi_2$ are isotopic, then $\tilde\phi_1$ and $\tilde\phi_2$ are orbit equivalent via a homeomorphism isotopic to the identity. \label{i:claim3}
  \end{enumerate}
\end{proposition}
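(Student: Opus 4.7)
The plan is to extract dynamical information from the two contact structures via cylindrical contact homology and then invoke the Barthelm\'e--Frankel--Mann reconstruction theorem to upgrade that information to an orbit equivalence. First, I would apply Proposition~\ref{prop:contactform} to fix contact forms $\alpha_1,\alpha_2$ for $\xi_1,\xi_2$ satisfying \eqref{i:reeb-nondegenerate}--\eqref{i:birkhoff-compatible}. In particular, each Reeb flow $R_{\alpha_i}$ is hypertight, is nondegenerate on primitive free homotopy classes, and for every primitive $[\gamma]\subset \pi_1(M)$ realizes $[\gamma]$ if and only if $\phi_i$ does, with nonzero signed Lefschetz count of representatives in that case.

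Next, I would use cylindrical contact homology $CH_*(\xi_i)$, which is well defined because $\alpha_i$ is hypertight, together with its decomposition $CH_*(\xi_i)=\bigoplus_{[\gamma]} CH_*^{[\gamma]}(\xi_i)$ along free homotopy classes in $\pi_1(M)$. For each primitive class $[\gamma]$, the generators of $CC_*^{[\gamma]}(\alpha_i)$ are exactly the primitive Reeb orbits in $[\gamma]$---all automatically good---and the $\Z/2$-parity of each generator agrees with its Lefschetz index. Hence $\chi\bigl(CH_*^{[\gamma]}(\xi_i)\bigr)$ equals the signed count in Proposition~\ref{prop:contactform}\eqref{i:gamma-signed-count}, so $CH_*^{[\gamma]}(\xi_i)\neq 0$ precisely when $[\gamma]$ is represented by a closed orbit of $\phi_i$.

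For part \eqref{i:claim2}, an isotopy between $\xi_1$ and $\xi_2$ induces an isomorphism $CH_*(\xi_1)\cong CH_*(\xi_2)$ preserving the splitting by free homotopy classes, so $\phi_1$ and $\phi_2$ realize exactly the same set of primitive free homotopy classes of closed orbits. The Barthelm\'e--Frankel--Mann reconstruction theorem then asserts that two pseudo-Anosov flows on a hyperbolic $3$-manifold whose periodic orbit sets realize the same collection of free homotopy classes are orbit equivalent through a homeomorphism isotopic to the identity, as required. Part \eqref{i:claim3} follows by repeating the argument on a finite cover $\tilde M\to M$: Proposition~\ref{prop:contactform} is explicitly stated to apply to the lifted flows $\tilde\phi_i$ and contact structures $\tilde\xi_i$, $\tilde M$ remains a hyperbolic rational homology sphere, and cylindrical contact homology pulls back compatibly along the cover, so the same comparison of free homotopy classes together with BFM applied on $\tilde M$ gives the conclusion. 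This mirrors the strategy used in \cite[Theorem~1.10]{barthelme-mann} for skew-Anosov flows.

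The main obstacle will be the analytic foundations of cylindrical contact homology in this setting: one needs a regularization scheme---e.g.\ Bao--Honda for hypertight contact forms, or Pardon's virtual framework---under which $CH_*$ is defined, decomposes by free homotopy classes, and is invariant under contactomorphism, even though our $\alpha_i$ is not fully nondegenerate on iterates. A related, more delicate point is checking that the algebraic signs attached to primitive Reeb generators in $CH_*$ really do reproduce the Lefschetz index signs used in Proposition~\ref{prop:contactform}\eqref{i:gamma-signed-count}, since this is exactly what transfers the nonvanishing of a signed count to the nonvanishing of $CH_*^{[\gamma]}(\xi_i)$. Once these foundational points are in place, hypertightness from Proposition~\ref{prop:contactform}, the Euler characteristic computation, and the BFM reconstruction combine to close the argument.
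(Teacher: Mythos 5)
Your argument follows the paper's proof essentially verbatim: apply Proposition~\ref{prop:contactform} to produce hypertight, suitably nondegenerate contact forms, then read off $\spectrum(\phi_i)$ from the nonvanishing of cylindrical contact homology summands via the Euler-characteristic/Lefschetz-index comparison, and finish with the Barthelm\'e--Frankel--Mann reconstruction theorem (for which transitivity follows from Mosher since $M$ is closed and atoroidal). One small inaccuracy in your treatment of part~\eqref{i:claim3}: a finite cover $\tilde M$ of a rational homology sphere $M$ need not itself be a rational homology sphere (e.g.\ if $|H_1(M)|$ is even), so you cannot literally rerun the argument on $\tilde M$ as if it satisfied the hypotheses of the proposition; the paper circumvents this exactly by proving directly in Proposition~\ref{prop:contactform} that the construction lifts to finite covers (the key point being that the pulled-back $2$-form $\tilde\omega$ remains exact), and since you cite that extension rather than reproving it, your argument goes through despite the misstatement.
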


\begin{proof}
  Given a flow $\phi$, let $\spectrum(\phi)$ be the set of primitive elements of $\pi_1(M)$ represented by closed orbits of $\phi$. Barthelm\'e, Frankel, and Mann \cite{bfm} prove that $\spectrum(\phi)$ determines a pseudo-Anosov flow up to orbit equivalence isotopic to the identity.  (This claim uses our hypothesis that $M$ is atoroidal, via \cite[Proposition~1.2]{bfm}.  We note that \cite{bfm} generally requires $\phi$ to be transitive, but by \cite[Proposition~2.7]{mosher} this is automatic since $M$ is closed and atoroidal. Finally, note that our definition of $\spectrum(\phi)$ differs slightly from that of \cite{bfm} because it includes only primitive elements. However, the two sets contain the same information because $g^k$ represents a closed orbit if and only if either $g$ or $g^{-1}$ represents an orbit, by \cite[Lemma~4.2]{zung}.)

  Suppose $\xi_1$ and $\xi_2$ are isotopic. Choose contact forms $\alpha_1$ and $\alpha_2$ for $\xi_1$ and $\xi_2$ as in Proposition~\ref{prop:contactform}. These forms are hypertight, so cylindrical contact homology is well-defined as an invariant of each $\xi_i$ up to isotopy   \cite{bao-honda,hutchings-nelson,hutchings-nelson-hypertight}.  In any primitive free homotopy class $[\gamma]$, the summand $\CH(\alpha_i, [\gamma])$ is graded mod 2 by the Conley--Zehnder index, which is even for positive hyperbolic orbits and odd for all others; meanwhile the Lefschetz index of an orbit is $-1$ for a positive hyperbolic orbit and $+1$ otherwise, so if $[\gamma]$ is represented by a closed orbit of $\varphi_i$ then part \eqref{i:gamma-signed-count} of Proposition~\ref{prop:contactform} says that $\chi\big(\CH(\alpha_i,[\gamma])\big)$ is nonzero.  Therefore, $\CH(\alpha_i,[\gamma])$ is nonzero if and only if $[\gamma]$ is represented by a closed orbit of $\phi_i$. In other words, $\CH(\alpha_i)$ determines $\spectrum(\phi_i)$. Since $\CH(\alpha_1)\cong \CH(\alpha_2)$, we conclude from Barthelm\'e--Frankel--Mann's result that $\phi_1$ is orbit equivalent to $\phi_2$ via a homeomorphism isotopic to the identity. This proves \eqref{i:claim2}.

For \eqref{i:claim3}, we observe that Proposition~\ref{prop:contactform} still applies to $\tilde\xi_i$ and $\tilde\phi_i$, because these contact structures and flows are lifted from contact structures and flows on the rational homology sphere $M$.  Thus \eqref{i:claim3} follows by exactly the same argument as \eqref{i:claim2}.
\end{proof}

\begin{proof}[Proof of Theorem~\ref{thm:finite-birkhoff-section}]
Suppose that $M$ is atoroidal and a rational homology sphere, and that $M$ has infinitely many pairwise distinct pseudo-Anosov flows $\phi_i$, each with a positive Birkhoff section.  Then Proposition~\ref{prop:contactform} associates to each $\phi_i$ a contact structure $\xi_i$, which is tight by Corollary~\ref{cor:universallytight}.  Proposition~\ref{prop:rsft} says that no two of the $\xi_i$ are isotopic, so $M$ has infinitely many pairwise distinct tight contact structures, and since $M$ is atoroidal this contradicts a theorem of Colin, Giroux, and Honda \cite[Theorem~2]{colin-giroux-honda}.
\end{proof}

\subsection{Flows and Dehn surgery on hyperbolic fibered links}
\label{ssec:flowsdehn}

We will be specifically interested in pseudo-Anosov flows on Dehn fillings of hyperbolic fibered links. Suppose $L\subset Y$ is an oriented hyperbolic fibered link with components $L_1,\dots,L_n$. Let \[\pi:Y\setminus L\to S^1\] be a fibration of its complement such that the oriented boundary of the closure of each fiber equals $L$. Associated with this fibration is a canonical \emph{suspension} pseudo-Anosov flow $\varphi$. One can extend this flow to Dehn surgeries via an operation called \emph{Fried surgery}, as sketched below.

Let $\nu(L_1),\dots, \nu(L_n)$ be disjoint closed tubular neighborhoods of the link components, and let
\[ \nu(L) = \nu(L_1)\cup \dots \cup \nu(L_n). \]
In Fried surgery, one starts with a \emph{blown up} flow $\bar\varphi$ on $M=Y\setminus \mathring{\nu}(L)$, together with a diffeomorphism
\[ f:M\setminus \partial M \to Y\setminus L \]
identifying $\bar\varphi|_{M\setminus \partial M}$ with $\varphi$. The restriction of $\bar\varphi$ to each torus boundary component $\partial{\nu(L_i)}\subset\partial M$ is a flow with finitely many closed orbits, all of the same slope. These orbits form an oriented multicurve $d_i$ on $\partial{\nu(L_i)}$ whose isotopy class is known as the \emph{degeneracy slope of $\varphi$ at $L_i$}.

Now let $r = (r_1,\dots,r_n)$, where $r_i$ is a slope on $\partial{\nu(L_i)}\subset \partial M$ such that the distance
\[ \Delta(d_i,r_i) := |d_i\cdot r_i| \]
between the degeneracy slope $d_i$ and $r_i$ is at least 2 for each $i =1, \dots, n$. Then $r$-surgery on $L$ can be performed by collapsing the boundary components of $M$ along the slopes $r_1,\dots,r_n$. Moreover, $\bar\varphi$ collapses to a topological pseudo-Anosov flow $\varphi(r)$ on the surgered manifold, which agrees with $\varphi$ (via $f$) away from the cores of the surgery. The core $\gamma_i$ obtained by collapsing $\partial{\nu(L_i)}$ is an orbit of $\varphi(r)$ with $\Delta(d_i,r_i)$ prongs; in particular, it is singular if and only if $\Delta(d_i,r_i)\geq 3$. 

\begin{proposition}\label{prop:surgery-no-perfect-fits}
Suppose $\varphi$ is the suspension pseudo-Anosov flow associated with a fibration of  the complement of a  hyperbolic fibered link $L = L_1\cup \dots \cup L_n$, with degeneracy slopes $d_1,\dots,d_n$ as above. Let $r = (r_1,\dots,r_n)$ be a tuple of boundary slopes such that
\[\Delta(d_i,r_i) \geq 3\]
for each $i$. Then the flow $\varphi(r)$ obtained via Fried surgery has no perfect fits.
\end{proposition}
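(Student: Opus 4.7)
The plan is to lift the question to the orbit space in the universal cover and reduce it to two cases: ascending chains of rectangles that persist in the original suspension flow (which is known to have no perfect fits), and chains that are genuinely new because of the Fried surgery at the new singular orbits (which the hypothesis $\Delta(d_i,r_i)\ge 3$ will rule out).

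First I would verify that the blown-up suspension flow $\bar\varphi$ on $M$ has no perfect fits. In the universal cover $\widetilde{M}$, the orbit space of the lifted flow is naturally identified with the universal cover $\widetilde\Sigma$ of the fiber $\Sigma$, and the lifted stable and unstable foliations are pulled back from the invariant foliations of the pseudo-Anosov monodromy on $\Sigma$. Because the monodromy is pseudo-Anosov on a compact surface, these lifted foliations on $\widetilde\Sigma$ have Hausdorff leaf spaces, and a standard argument shows that any ascending chain of rectangles in the orbit space is bounded. This translates directly into the absence of perfect fits for the suspension flow.

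Next I would analyze how the orbit space of $\varphi(r)$ differs from that of $\bar\varphi$. Away from neighborhoods of the new core orbits $\gamma_i$, the two flows are identified via the diffeomorphism $f$, so the lifted orbit spaces agree off a discrete set of lifts of the $\gamma_i$. Near each such lift, the local orbit space model is a $\Delta(d_i,r_i)$-prong singularity with $\Delta(d_i,r_i)\ge 3$. Now suppose for contradiction that $\varphi(r)$ admits an ascending chain $R_1 \subset R_2 \subset \dots$ with no upper bound. Lift the chain to $\widetilde{M}$ and pass to a convergent subsequence in the orbit space. Either (a) the limiting configuration lies entirely in the unchanged region, and then descends to an unbounded ascending chain for $\bar\varphi$, contradicting the previous paragraph; or (b) it accumulates at a lift of some $\gamma_i$. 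In case (b), the $k$-prong local model with $k\ge 3$ forces the asymptotic stable and unstable boundary rays of the chain to meet transversely at one of the prongs of $\gamma_i$, producing a rectangle which serves as an upper bound, again contradicting the assumption.

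The main obstacle is case (b): carefully describing how the lifted orbit space of $\varphi(r)$ is obtained from that of $\bar\varphi$ in a neighborhood of a new singular orbit, and verifying that the $k$-prong local model with $k\ge 3$ really does force the candidate chain of rectangles to close up. This requires some care because Fried surgery initially produces only a topological pseudo-Anosov flow, so it may be prudent first to invoke the smoothing provided by Shannon and Agol--Tsang (already cited in the excerpt) in order to have the standard orbit-space tools available. Given the local model, however, the hypothesis $k\ge 3$ is used transparently: the singular orbit has at least three distinct stable (and unstable) prongs, which precludes the degenerate two-prong configuration that would allow an ascending chain to escape to infinity at the singularity.
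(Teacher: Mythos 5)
Your proposal takes a much more elaborate route than the paper and leaves a genuine gap where it matters most. The paper's argument is a short, direct one that never needs the orbit-space machinery: since $\Delta(d_i,r_i)\ge 3$ makes each core $\gamma_i$ a \emph{singular} orbit of $\varphi(r)$, and since a rectangle by definition may only meet singular orbits along its boundary, \emph{every} rectangle for $\varphi(r)$ already avoids every $\gamma_i$ in its interior. Hence any rectangle for $\varphi(r)$ is automatically a rectangle for $\varphi$ (the two flows agree away from the cores), an ascending chain for $\varphi(r)$ is an ascending chain for $\varphi$, the latter is bounded because suspension pseudo-Anosov flows have no perfect fits by \cite[Theorem~G]{fenley}, and that upper bound can be shrunk to avoid the $\gamma_i$ in its interior so that it also bounds the original chain. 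The whole point of the hypothesis $\Delta\ge 3$ is this elementary observation about where singular orbits are allowed to sit in a rectangle.

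Your proposal misses that observation, which is why you are forced into the dichotomy (a)/(b). Case (b) is exactly the content you cannot avoid and also the place you acknowledge is ``the main obstacle'': you assert that a chain accumulating at a lift of a $k$-pronged orbit with $k\ge 3$ ``forces'' the stable and unstable boundary rays to close up into a bounding rectangle, but you give no argument, and it is not clear this is true as stated --- the relevant fact is not a convergence statement about boundary rays but the definitional constraint that singular orbits cannot lie in the interior of a rectangle, so case (b) simply cannot occur. In addition, two smaller points: you need Fenley's result for the original suspension flow $\varphi$, not the blown-up flow $\bar\varphi$, and that is a black box (\cite[Theorem~G]{fenley}) rather than something to reprove via leaf-space Hausdorffness; and the worry about smoothing via Shannon and Agol--Tsang is a red herring here, since perfect fits and rectangles are defined for topological pseudo-Anosov flows and the comparison between $\varphi(r)$ and $\varphi$ is purely at that level.
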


\begin{proof}
Each core $\gamma_i$ is a singular orbit of the flow $\varphi(r)$ since $\Delta(d_i,r_i) \geq 3$ for all $i$.  Therefore, each rectangle for the flow $\varphi(r)$  avoids every $\gamma_i$ in its interior, and thus gives rise to a rectangle for the flow $\varphi$ (which agrees with $\varphi(r)$ away from the $\gamma_i$). In particular, any  ascending chain of rectangles for $\varphi(r)$ gives rise to a chain of rectangles for $\phi$. This chain in $\phi$ has an upper bound since $\varphi$ is a suspension pseudo-Anosov flow and therefore has no perfect fits by  \cite[Theorem~G]{fenley}, and this upper bound  can be chosen to avoid each $\gamma_i$ in its interior, so the original  chain in $\varphi(r)$ also has an upper bound. Hence, $\varphi(r)$ has no perfect fits.
\end{proof}

We will often reason about the flows $\varphi(r)$ in terms of the fibration $\pi:Y\setminus L\to S^1$. Let $F$ be the closure of a fiber of this fibration, and let $h:F\to F$ be the monodromy, which fixes $\partial F $ pointwise. Then we can identify the mapping torus of $h$, \[\frac{F\times[0,1]}{(x,1)\sim(h(x),0)},\] with $M = Y\setminus \mathring{\nu}(L)$. Let $B_1,\dots,B_n$ be the oriented boundary components of $F$, with $B_i\subset \partial{\nu(L_i)}$. Then $\partial{\nu(L_i)}$ has a meridian-longitude coordinate system, which we denote by $(\mu_i',\lambda_i'),$ where $\lambda_i' = B_i$ and $\mu_i'$ is the oriented meridian of $L_i$ given by \[\mu_i' = p_i\times[0,1]/\sim,\] where $p_i$ is a point in $B_i$. This meridian is oriented so that \[\lambda_i'\cdot\mu_i'=1\] on the oriented boundary component $\partial(M\setminus\mathring{\nu}(L_i)) = -\partial \nu(L_i)$. 

Since $L$ is hyperbolic, the monodromy $h$ is freely isotopic to a pseudo-Anosov homeomorphism $h_0$ of $F$, and the flow $\bar\varphi$ on $M$ may be identified with the suspension flow of $h_0$. The \emph{fractional Dehn twist coefficient of $h$ at $B_i$}, introduced by Honda, Kazez, and Mati{\'c} \cite{hkm-veering2}, is a quantity \[c_{B_i}(h)\in \Q\] which measures the twisting near $B_i$ in the free isotopy from $h$ to $h_0$. It can also be viewed  as a reinterpretation of the degeneracy slope. Indeed, let $q_i$ denote the number of prongs at $B_i$ of the stable foliation of $F$ fixed by $h_0$. Then $c_{B_i}(h) = k_i/q_i$ for some integer $k_i$, and the degeneracy slope $d_i$ is given simply by \begin{equation}\label{eqn:degfdtc}d_i = q_i\mu_i'+k_i\lambda_i',\end{equation} which represents the oriented isotopy class of $\gcd(q_i,k_i)$ copies of some primitive curve on $\partial \nu(L_i)$. We will use this formula to compute degeneracy slopes in \S\ref{sec:proofs}.

Furthermore, the flow $\varphi(r)$ resulting from Fried surgery has a Birkhoff section given by the image of the surface $F$ after collapsing the boundary tori of $M$ along the slopes $r_i$. This Birkhoff section is positive if the oriented curves $d_i$ and $\lambda'_i$ intersect $r_i$ with the same sign for all $i$, and it's negative if $d_i$ and $\lambda'_i$ intersect $r_i$ with the opposite sign for all $i$.

\begin{remark}
\label{rmk:smoothtopological}
The results of \cite{shannon,agol-tsang} discussed above, which assert that a transitive topological pseudo-Anosov flow is orbit equivalent to a pseudo-Anosov flow, only require the flow to be transitive in order to assert the existence of a Birkhoff section \cite{fried,brunella}.  We will apply these results to flows like $\varphi(r)$ that are already known (or assumed) to have Birkhoff sections. In particular, we can and will assume that all of the flows we construct in \S\ref{sec:proofs} are genuine pseudo-Anosov flows rather than merely topological pseudo-Anosov flows.
\end{remark}

\section{Proofs of the main results}\label{sec:proofs}

Fix an even integer $n\geq 4$. Let $\mathbb{L}_n$ be the chain link in Figure \ref{fig:chainlink}, and let $\mathbb{L}_n' = -L_0 \cup L_1 \cup \dots \cup L_{n-1}$ be the oriented link obtained from $\mathbb{L}_n$ by reversing the orientation of the component $L_0$. Then $\mathbb{L}_n'$ is the oriented boundary of the surface $F$ given by a plumbing of one negative horizontal Hopf band with $n-2$ vertical positive Hopf bands and 2 vertical negative Hopf bands, as shown in Figure \ref{fig:F}.
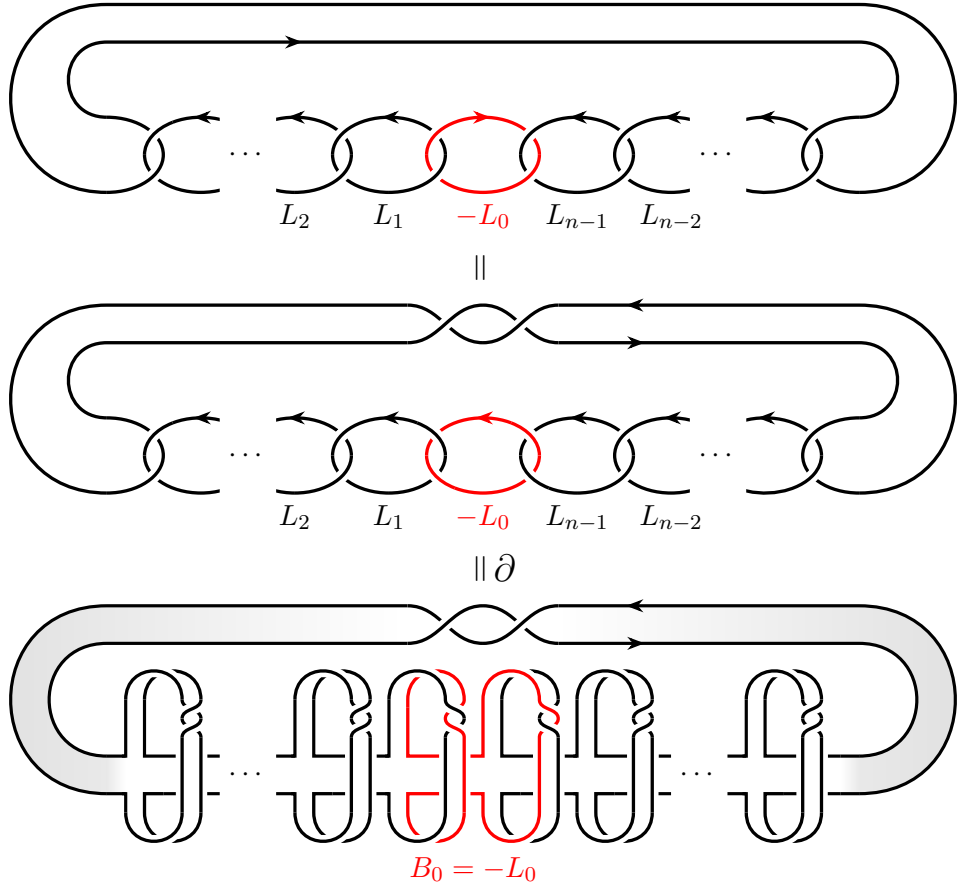
\begin{figure}
\begin{tikzpicture}
\begin{scope}
\begin{scope}
\clip (-3,-1.2) -- (3,-1.2) -- (3,-0.6) -- (2.75,-0.6) -- (2.75,0.8) -- (-2.75,0.8) -- (-2.75,-0.6) -- (-3,-0.6) -- cycle;
\draw[link,double=red] (0,0) ellipse (0.75 and 0.5) ++(0,-0.5) node[below,red] {$-L_0$};
\foreach \i/\lname in {-2/2,-1/1,1/n-1,2/n-2} {
  \draw[link] (1.25*\i,0) ellipse (0.75 and 0.5) ++(0,-0.5) node[below,black] {$L_{\lname}$};
}
\foreach \i in {-2,-1,0,1} {
  \ifthenelse{\i=0}{\def\lcolor{red}}{\def\lcolor{black}}
  \ifthenelse{\i=0}{\def\j{1}}{\def\j{-1}}
  \draw[link, double=\lcolor] (1.25*\i,0) ++(0,-0.5) arc (270:360:0.75 and 0.5);
  \draw[link, double=\lcolor] (1.25*\i,0) ++(0,0.5) arc (90:180:0.75 and 0.5);
  \draw[line width=0.8pt, \lcolor, -Stealth] (1.25*\i,0) ++ (90:0.75 and 0.5) -- ++(0.1*\j,0);
}
\draw[line width=0.8pt, black, -Stealth] (1.25*2,0) ++ (90:0.75 and 0.5) -- ++(-0.1,0);
\end{scope}
\foreach \i in {-1,1} { \node at (\i*3.125,0) {$\cdots$}; }
\begin{scope}
\clip (3.5,0.8) -- (3.5,-0.8) -- (5,-0.8) -- (5,2.5) -- (-5,2.5) -- (-5,-0.8)  -- (-3.5,-0.8) -- (-3.5,0.8) -- cycle;
\foreach \i in {-1,1} {
  \draw[link] (3.75*\i, 0) ellipse (0.75 and 0.5);
  \draw[line width=0.8pt, black, -Stealth] (3.75*\i,0) ++ (90:0.75 and 0.5) -- ++(-0.1,0);
  \draw[link] (5*\i,0) ++(0,0.5) arc (90:90+180*\i:0.75 and 0.5);
  \draw[link] (5*\i,0) ++ (-1.25*\i,0) ++ (-120*\i:0.75 and 0.5) arc (-120*\i:90-90*\i:0.75 and 0.5);
}
\end{scope}
\draw[link] (5,0.5) to[out=0,in=0] ++(0,1) -- ++(-10,0) to[out=180,in=180] ++(0,-1);
\draw[link] (5,-0.5) to[out=0,in=0] ++(0,2.5) -- ++(-10,0) to[out=180,in=180] ++(0,-2.5);
\draw[line width=0.8pt, black, -Stealth] (-2.5,1.5) -- ++(0.1,0);
\end{scope}
\node at (0,-1.5) {\rotatebox{90}{\Large$=$}};
\begin{scope}[yshift=-4cm]
\begin{scope}
\clip (-3,-1.2) -- (3,-1.2) -- (3,-0.6) -- (2.75,-0.6) -- (2.75,0.8) -- (-2.75,0.8) -- (-2.75,-0.6) -- (-3,-0.6) -- cycle;
\draw[link,double=red] (0,0) ellipse (0.75 and 0.5) ++(0,-0.5) node[below,red] {$-L_0$};
\foreach \i/\lname in {-2/2,-1/1,1/n-1,2/n-2} {
  \draw[link] (1.25*\i,0) ellipse (0.75 and 0.5) ++(0,-0.5) node[below,black] {$L_{\lname}$};
}
\foreach \i in {-2,-1,0,1} {
  \ifthenelse{\i=0}{\def\lcolor{red}}{\def\lcolor{black}}
  \ifthenelse{\i=0}{\def\j{1}}{\def\j{-1}}
  \draw[link, double=\lcolor] (1.25*\i,0) ++(0,-0.5) arc (270:360:0.75 and 0.5);
  \draw[link, double=\lcolor] (1.25*\i,0) ++(0,0.5) arc (90:180:0.75 and 0.5);
  \draw[line width=0.8pt, \lcolor, -Stealth] (1.25*\i,0) ++ (90:0.75 and 0.5) -- ++(-0.1,0);
}
\draw[line width=0.8pt, black, -Stealth] (1.25*2,0) ++ (90:0.75 and 0.5) -- ++(-0.1,0);
\end{scope}
\foreach \i in {-1,1} { \node at (\i*3.125,0) {$\cdots$}; }
\begin{scope}
\clip (3.5,0.8) -- (3.5,-0.8) -- (5,-0.8) -- (5,2.5) -- (-5,2.5) -- (-5,-0.8)  -- (-3.5,-0.8) -- (-3.5,0.8) -- cycle;
\foreach \i in {-1,1} {
  \draw[link] (3.75*\i, 0) ellipse (0.75 and 0.5);
  \draw[line width=0.8pt, black, -Stealth] (3.75*\i,0) ++ (90:0.75 and 0.5) -- ++(-0.1,0);
  \draw[link] (5*\i,0) ++(0,0.5) arc (90:90+180*\i:0.75 and 0.5);
  \draw[link] (5*\i,0) ++ (-1.25*\i,0) ++ (-120*\i:0.75 and 0.5) arc (-120*\i:90-90*\i:0.75 and 0.5);
}
\end{scope}
\draw[link,double=red] (0,0) ++ (0.75,0) arc (0:60:0.75 and 0.5);
\draw[link,double=red] (0,0) ++ (-0.75,0) arc (180:240:0.75 and 0.5);
\draw[link] (-1.25,0) ++(0.75,0) arc (0:60:0.75 and 0.5);
\draw[link] (1.25,0) ++(-0.75,0) arc (180:240:0.75 and 0.5);

\draw[link] (5,0.5) to[out=0,in=0] ++(0,1) -- ++(-4,0) ++(-2,0) -- ++(-4,0) to[out=180,in=180] ++(0,-1);
\draw[link] (5,-0.5) to[out=0,in=0] ++(0,2.5) -- ++(-4,0) ++(-2,0) -- ++(-4,0) to[out=180,in=180] ++(0,-2.5);
\draw[link,looseness=1] (-1,2) \foreach \i in {1,2} { to[out=0,in=180] ++(1,-0.5) ++(0,0.5) };
\draw[link,looseness=1] (-1,1.5) \foreach \i in {1,2} { to[out=0,in=180] ++(1,0.5) ++(0,-0.5) };
\draw[line width=0.8pt, black, -Stealth] (2.05,1.5) -- ++(0.1,0);
\draw[line width=0.8pt, black, -Stealth] (2,2) -- ++(-0.1,0);
\end{scope}
\node at (0,-5.5) {\rotatebox{90}{\Large$=$}};
\node[right] at (0,-5.5) {\Large$\partial$};
\begin{scope}[yshift=-8cm]
\shade[looseness=1.75, left color=gray!0, middle color=gray!25, right color=gray!25] (5,0) to[out=0,in=0] ++(0,1.5) -- ++(-4,0) -- ++ (0,0.5) -- ++(4,0) to[out=0,in=0] ++(0,-2.5) -- ++(0,0.5);
\shade[looseness=1.75, left color=gray!25, middle color=gray!25, right color=gray!0] (-5,0) to[out=180,in=180] ++(0,1.5) -- ++(4,0) -- ++ (0,0.5) -- ++(-4,0) to[out=180,in=180] ++(0,-2.5) -- ++(0,0.5);
\begin{scope}
\clip(-5,-0.6) rectangle (5,0.1);
\shade [left color=gray!18, right color=gray!0] (-5.01,0) rectangle (-4.75,-0.5);
\shade [left color=gray!0, right color=gray!18] (5.01,0) rectangle (4.75,-0.5);
\foreach \p in { (-5.002,0), (-5.002,-0.5), (4.75,0), (4.75,-0.5) } { (\draw[line width=1.2pt] \p -- ++(0.252,0); }
\end{scope}
\draw[line width=1.2pt,looseness=1.75] (5,0) to[out=0,in=0] ++(0,1.5) -- ++(-4,0) ++(-2,0) -- ++(-4,0) to[out=180,in=180] ++(0,-1.5);
\draw[line width=1.2pt,looseness=1.75] (5,-0.5) to[out=0,in=0] ++(0,2.5) -- ++(-4,0) ++(-2,0) -- ++(-4,0) to[out=180,in=180] ++(0,-2.5);

\foreach \x in {-4.75,-2.5,-1.25,0,1.25,3.5} {
  \draw[line width=1.2pt] (\x,0) ++(-0.05,0) -- ++(0.05,0) -- ++(0,0.75) coordinate (topbend) ++(0.25,0) -- ++(0,-0.75) -- ++(0.5,0) ++(0.25,0) -- ++(0.25,0) coordinate (rightbend);
  \draw[link] (topbend) ++(0.25,0) to[out=90,in=90] ++(0.75,0) \foreach \i in {1,2} { ++(-0.25,0) to[out=270,in=90,looseness=1] ++(0.25,-0.25) } -- ++(0,-1) to[out=270,in=270] ++(-0.75,0) -- ++(0,0.25) -- ++(0.25,0) coordinate (se-bend);
  \draw[link] (topbend) to[out=90,in=90] ++(0.75,0)  \foreach \i in {1,2} { ++(0.25,0) to[out=270,in=90,looseness=1] ++(-0.25,-0.25) } -- ++(0,-1) to[out=270,in=270] ++(-0.75,0) coordinate (sw-bend);
  \draw[line width=1.2pt] (sw-bend) -- ++(0,0.25) -- ++(-0.05,0);
  \draw[line width=1.2pt] (se-bend) -- ++(0.25,0) ++(0.25,0) -- ++(0.25,0);
  \draw[link] (rightbend) ++ (-0.25,0.25) -- ++(0,-1) ++(-0.25,0) -- ++(0,1);
}
\node at (-3.125,-0.25) {$\cdots$};
\node at (2.875,-0.25) {$\cdots$};
\draw[line width=1.2pt] (-2.5,0) -- ++(-0.25,0) (-2.5,-0.5) -- ++(-0.25,0);
\draw[line width=1.2pt] (3.25,-0) -- ++(0.25,0) (3.25,-0.5) -- ++(0.25,0);
\draw[link,looseness=1] (-1,2) \foreach \i in {1,2} { to[out=0,in=180] ++(1,-0.5) ++(0,0.5) };
\draw[link,looseness=1] (-1,1.5) \foreach \i in {1,2} { to[out=0,in=180] ++(1,0.5) ++(0,-0.5) };
\draw[line width=0.8pt, black, -Stealth] (2.05,1.5) -- ++(0.1,0);
\draw[line width=0.8pt, black, -Stealth] (2,2) -- ++(-0.1,0);

\draw[link,double=red] (-1.25,0) ++ (0.25,0.75) -- ++(0,-0.75) -- ++(0.5,0) ++(0.25,0) -- ++(0.25,0) -- ++(0,0.75) to[out=90,in=90] ++(0.75,0) \foreach \i in {1,-1} { to[out=270,in=90,looseness=1] ++(\i*0.25,-0.25) } -- ++(0,-1) to[out=270,in=270] ++(-0.75,0) -- ++(0,0.25) -- ++(-0.25,0) ++ (-0.25,0) -- ++(-0.5,0) -- ++(0,-0.25) to[out=270,in=270] ++(0.75,0) coordinate (part2);
\draw[link,double=red] (part2) -- ++(0,1) \foreach \i in {-1,1} { to[out=90,in=270,looseness=1] ++(\i*0.25,0.25) } to[out=90,in=90] ++(-0.75,0) -- ++(0,-0.1);
\node[red] at (-0.125,-1.5) {$B_0 = -L_0$};
\draw[link] (-1.25,0) ++(0,-0.75) to[out=270,in=270] ++(0.75,0) -- ++(0,1) ++(0.25,0.25) to[out=90,in=270,looseness=1] ++(-0.25,0.25) to[out=90,in=90] ++(-0.75,0);
\draw[link] (0,0) ++(1,0.25) to[out=90,in=270,looseness=1] ++(-0.25,0.25);

\end{scope}
\end{tikzpicture}
\caption{The chain link $\mathbb{L}'_n$ as the oriented boundary of the surface $F$.}
\label{fig:F}
\end{figure}
In particular, $\mathbb{L}_n'$ is a fibered link with fiber $F$. This fiber is a torus with $n$ disks removed, and the monodromy is the composition \[h = D_{b_0}^{-1}\circ D_{b_1}\circ\dots \circ D_{b_{n-2}} \circ D_{b_{n-1}}^{-1}\circ D_a^{-1} \] of Dehn twists around the curves shown in Figure~\ref{fig:abstract}. 
\begin{figure}
\begin{tikzpicture}
\begin{scope}
\fill[looseness=1.75, gray!15] (5,0) to[out=0,in=0] ++(0,1.5) -- ++(-5,0) -- ++ (0,0.5) -- ++(5,0) to[out=0,in=0] ++(0,-2.5) -- ++(0,0.5);
\fill[looseness=1.75, gray!15] (-5,0) to[out=180,in=180] ++(0,1.5) -- ++(5,0) -- ++ (0,0.5) -- ++(-5,0) to[out=180,in=180] ++(0,-2.5) -- ++(0,0.5);
\begin{scope}
\clip(-5,-0.6) rectangle (5,0.1);
\shade [left color=gray!15, right color=gray!0] (-5.01,0) rectangle (-4.75,-0.5);
\shade [left color=gray!0, right color=gray!15] (5.01,0) rectangle (4.75,-0.5);
\foreach \p in { (-5.002,0), (-5.002,-0.5), (4.75,0), (4.75,-0.5) } { (\draw[line width=1.2pt] \p -- ++(0.252,0); }
\end{scope}
\draw[line width=1.2pt,looseness=1.75] (5,0) to[out=0,in=0] ++(0,1.5) -- ++(-10,0) to[out=180,in=180] ++(0,-1.5);
\draw[line width=1.2pt,looseness=1.75] (5,-0.5) to[out=0,in=0] ++(0,2.5) -- ++(-10,0) to[out=180,in=180] ++(0,-2.5);
\draw[very thick, Green, looseness=1.75] (-5,-0.25) to[out=180,in=180] ++(0,2) -- ++(10,0) to[out=0,in=0] ++(0,-2);
\draw[very thick, Green] (-5,-0.25) -- ++(1,0) ++(0.25,0) -- ++(0.25,0) ++(0.75,0) \foreach \i in {1,...,4} { -- ++(1,0) ++(0.25,0) } -- ++(0.25,0) ++(0.75,0) -- ++(1,0) ++(0.25,0) -- ++(0.5,0);

\foreach \x/\dtsign in {-4.75/1,-2.5/1,-1.25/-1,0/-1,1.25/1,3.5/1} {
  \ifthenelse{\dtsign=1}{\def\dtcolor{blue}}{\def\dtcolor{Green}}
  \draw[\dtcolor,looseness=1.75] (\x,0) ++(0.125,0) -- ++(0,0.75) to[out=90,in=90] ++(0.75,0) -- ++(0,-1.5) to[out=270,in=270] ++(-0.75,0) -- ++(0,1);
  \draw[line width=1.2pt] (\x,0) ++(-0.05,0) -- ++(0.05,0) -- ++(0,0.75) coordinate (topbend) ++(0.25,0) -- ++(0,-0.75) -- ++(0.5,0) ++(0.25,0) -- ++(0.25,0) coordinate (rightbend);
  \draw[link] (topbend) ++(0.25,0) to[out=90,in=90] ++(0.75,0) -- ++(0,-0.5) -- ++(0,-1) to[out=270,in=270] ++(-0.75,0) -- ++(0,0.25) -- ++(0.25,0) coordinate (se-bend);
  \draw[link] (topbend) to[out=90,in=90] ++(0.75,0) -- ++(0,-0.5) -- ++(0,-1) to[out=270,in=270] ++(-0.75,0) coordinate (sw-bend);
  \draw[line width=1.2pt] (sw-bend) -- ++(0,0.25) -- ++(-0.05,0);
  \draw[line width=1.2pt] (se-bend) -- ++(0.25,0) ++(0.25,0) -- ++(0.25,0);
  \draw[link] (rightbend) ++ (-0.25,0.25) -- ++(0,-1) ++(-0.25,0) -- ++(0,1);
}
\node at (-3.125,-0.25) {$\cdots$};
\node at (2.875,-0.25) {$\cdots$};
\draw[line width=1.2pt] (-2.5,0) -- ++(-0.25,0) (-2.5,-0.5) -- ++(-0.25,0);
\draw[line width=1.2pt] (3.25,-0) -- ++(0.25,0) (3.25,-0.5) -- ++(0.25,0);
\draw[line width=0.8pt, black, -Stealth] (2.05,1.5) -- ++(0.1,0);
\draw[line width=0.8pt, black, -Stealth] (2,2) -- ++(-0.1,0);

\draw[link,double=red] (-1.25,0) ++ (0.25,0.75) -- ++(0,-0.75) -- ++(0.5,0) ++(0.25,0) -- ++(0.25,0) -- ++(0,0.75) to[out=90,in=90] ++(0.75,0) -- ++(0,-0.5) -- ++(0,-1) to[out=270,in=270] ++(-0.75,0) -- ++(0,0.25) -- ++(-0.25,0) ++(-0.25,0) -- ++(-0.5,0) -- ++(0,-0.25) to[out=270,in=270] ++(0.75,0) coordinate (part2);
\draw[link,double=red] (part2) -- ++(0,1) -- ++(0,0.5) to[out=90,in=90] ++(-0.75,0) -- ++(0,-0.1);
\node[red] at (-0.125,-1.5) {$B_0 = -L_0$};
\draw[link] (-1.25,0) ++(0,-0.75) to[out=270,in=270] ++(0.75,0) -- ++(0,1.5) to[out=90,in=90] ++(-0.75,0);
\end{scope}

\begin{scope}[scale=1.2,yshift=-6.5cm]
\path (1.5,0) arc (0:180:1.5 and 0.25) -- ++(0,0.25) coordinate (b1) coordinate (h1) to[out=90,in=0,looseness=1.5] ++(-0.5,0.25) arc (270:90:0.05 and 0.1) to[out=0,in=270,looseness=1.5] ++(0.5,0.25) coordinate (b2) coordinate (i1) coordinate (h2) to[out=90,in=-20,looseness=1.1] ++(-0.5,0.25) to[out=160,in=160,looseness=1.25] ++(70:0.2) to[out=-20,in=-110,looseness=1.75] ++(0.6,0.3) coordinate (i2) arc (160:150:1.5) coordinate (b3) arc (150:140:1.5) coordinate (h3) to[out=50,in=310,looseness=1.25] ++(-0.3,0.5) to[out=130,in=130,looseness=1.5] ++(0.15,0.15) to[out=310,in=230,looseness=1.25] ++(0.5,-0.3) coordinate (i3) to[out=50,in=190,looseness=0.75] coordinate[midway] (b4) (-0.35,2.75) coordinate (h4) to[out=10,in=270,looseness=1.5] ++(0.25,0.6) arc (180:0:0.1 and 0.075) to[out=270,in=170,looseness=1.5] ++(0.25,-0.6) coordinate (i4) to[out=-10,in=130,looseness=0.75] coordinate[midway] (b5) ++(0.5,-0.3) to[out=310,in=40] ++(0.3,0.3);
\path let \p1 = (i3) in coordinate (r3) at (-\x1,\y1);

\draw[thick,fill=gray!10] (1.5,0) arc (0:180:1.5 and 0.2) -- ++(0,0.25) coordinate (b1) to[out=90,in=0,looseness=1.5] ++(-0.5,0.25) arc (270:90:0.05 and 0.1) coordinate[midway] (bl1) to[out=0,in=270,looseness=1.5] ++(0.5,0.25) coordinate (b2) to[out=90,in=-20,looseness=1.1] ++(-0.5,0.25) to[out=160,in=160,looseness=1.25] coordinate[midway] (bl2) ++(70:0.2) to[out=-20,in=-110,looseness=1.75] ++(0.6,0.3) arc (160:150:1.5) coordinate (b3) arc (150:140:1.5) to[out=50,in=310,looseness=1.25] ++(-0.3,0.5) to[out=130,in=130,looseness=1.5] ++(0.15,0.15) to[out=310,in=230,looseness=1.25] ++(0.5,-0.3) to[out=50,in=190,looseness=0.75] coordinate[midway] (b4) (-0.35,2.75) to[out=10,in=270,looseness=1.5] ++(0.25,0.6) arc (180:0:0.1 and 0.075) coordinate (blhalf) to[out=270,in=170,looseness=1.5] ++(0.25,-0.6) to[out=-10,in=130,looseness=0.75] coordinate[midway] (b5) (r3) to[out=-40,in=230] ++(0.5,0.3) to[out=50,in=50,looseness=1.5] ++(0.15,-0.15) to[out=230,in=130,looseness=1.25] ++(-0.3,-0.5) arc (40:30:1.5) coordinate (b6) arc (30:20:1.5) to[out=-70,in=200,looseness=1.75] ++(0.6,-0.3) to[out=20,in=20,looseness=1.5] coordinate[midway] (bl-2) ++(-70:0.2) to[out=200,in=90,looseness=1.1] ++(-0.5,-0.25) coordinate (b7) to[out=270,in=180,looseness=1.5] ++(0.5,-0.25) arc (90:-90:0.05 and 0.1) coordinate[midway] (bl-1) to[out=180,in=90,looseness=1.5] ++(-0.5,-0.25) coordinate (b8) -- ++(0,-0.25);
\draw[thick,red] (1.5,0) arc (0:180:1.5 and 0.2);

\coordinate (hole) at (0,1.25);
\draw[Green,very thick,densely dashed] (hole) ++(210:0.25) to[bend right=15] (b1);
\draw[blue,very thick,densely dashed] (hole) ++(180:0.25) to[bend right=15] (b2);
\draw[blue,very thick,densely dashed] (hole) ++(150:0.25) to[bend right=15] (b3);
\draw[blue,very thick,densely dashed] (hole) ++(105:0.25) to[bend right=15] (b4);
\draw[blue,very thick,densely dashed] (hole) ++(75:0.25) to[bend left=15] (b5);
\draw[blue,very thick,densely dashed] (hole) ++(30:0.25) to[bend left=15] (b6);
\draw[blue,very thick,densely dashed] (hole) ++(0:0.25) to[bend left=15] (b7);
\draw[Green,very thick,densely dashed] (hole) ++(-30:0.25) to[bend left=15] (b8);
\draw[fill=white] (hole) circle (0.25);

\draw[thin,-Stealth] (0,-0.75) -- (0,4.25);
\draw[thin,-latex] (0,3.85) node[left, outer sep=5pt] {$\tau$} ++(105:0.25 and 0.1) arc(105:440:0.25 and 0.1);

\node[left] at (bl1) {$B_1$};
\node[left] at (bl2) {$B_2$};
\node[left] at (b3) {\rotatebox{60}{$\cdots$}};
\node[right] at (blhalf) {$B_{n/2}$};
\node[right] at (b6) {\rotatebox{-60}{$\cdots$}};
\node[right] at (bl-2) {$B_{n-2}$};
\node[right] at (bl-1) {$B_{n-1}$};
\node[below,red] at (300:1.5 and 0.25) {$B_0$};

\draw[thick,fill=gray!15, fill opacity=0.75] (1.5,0) arc (360:180:1.5 and 0.25) -- ++(0,0.25) coordinate (b1) to[out=90,in=0,looseness=1.5] ++(-0.5,0.25) arc (-90:90:0.05 and 0.1) to[out=0,in=270,looseness=1.5] ++(0.5,0.25) coordinate (b2) to[out=90,in=-20,looseness=1.1] ++(-0.5,0.25) to[out=340,in=340,looseness=1] ++(70:0.2) to[out=-20,in=-110,looseness=1.75] ++(0.6,0.3) arc (160:150:1.5) coordinate (b3) arc (150:140:1.5) to[out=50,in=310,looseness=1.25] ++(-0.3,0.5) to[out=310,in=310,looseness=1] ++(0.15,0.15) to[out=310,in=230,looseness=1.25] ++(0.5,-0.3) to[out=50,in=190,looseness=0.75] coordinate[midway] (b4) (-0.35,2.75) to[out=10,in=270,looseness=1.5] ++(0.25,0.6) arc (180:360:0.1 and 0.075) to[out=270,in=170,looseness=1.5] ++(0.25,-0.6) to[out=-10,in=130,looseness=0.75] coordinate[midway] (b5) (r3) to[out=-40,in=230] ++(0.5,0.3) to[out=230,in=230,looseness=1] ++(0.15,-0.15) to[out=230,in=130,looseness=1.25] ++(-0.3,-0.5) arc (40:30:1.5) coordinate (b6) arc (30:20:1.5) to[out=-70,in=200,looseness=1.75] ++(0.6,-0.3) to[out=200,in=200,looseness=1] ++(-70:0.2) to[out=200,in=90,looseness=1.1] ++(-0.5,-0.25) coordinate (b7) to[out=270,in=180,looseness=1.5] ++(0.5,-0.25) arc (90:270:0.05 and 0.1) to[out=180,in=90,looseness=1.5] ++(-0.5,-0.25) coordinate (b8) -- ++(0,-0.25);

\draw[fill=white] (0,1.25) coordinate (hole) circle (0.25);
\draw[thin] (hole) ++(0,-0.25) -- ++(0,0.5); 
\draw[Green, very thick] (hole) circle (0.5) ++(270:0.5) node[below, black, inner sep=2pt] {\footnotesize$a$};
\draw[Green,very thick] (hole) ++(210:0.25) to[bend left=15] coordinate[midway] (b1-label) (b1);
\path (b1-label) ++(0.15,-0.15) node {\footnotesize$b_0$};
\draw[blue,very thick] (hole) ++(180:0.25) to[bend left=15] coordinate[pos=0.85] (b2-label) (b2);
\path (b2-label) ++(0,-0.2) node {\footnotesize$b_1$};
\draw[blue,very thick] (hole) ++(150:0.25) to[bend left=15] (b3);
\draw[blue,very thick] (hole) ++(105:0.25) to[bend left=15] (b4);
\draw[blue,very thick] (hole) ++(75:0.25) to[bend right=15] (b5);
\draw[blue,very thick] (hole) ++(30:0.25) to[bend right=15] (b6);
\draw[blue,very thick] (hole) ++(0:0.25) to[bend right=15] (b7);
\draw[Green,very thick] (hole) ++(-30:0.25) to[bend right=15] coordinate[midway] (b-1-label) (b8);
\path (b-1-label) ++(-0.15,-0.15) node {\footnotesize$b_{n-1}$};

\draw[very thick,red] (1.5,0) arc (360:180:1.5 and 0.25);
\end{scope}
\end{tikzpicture}
\caption{The fiber surface $F$, viewed abstractly, with monodromy a composition of Dehn twists about the indicated curves.  We perform negative Dehn twists about green curves, and positive Dehn twists about blue curves.}
\label{fig:abstract}
\end{figure}
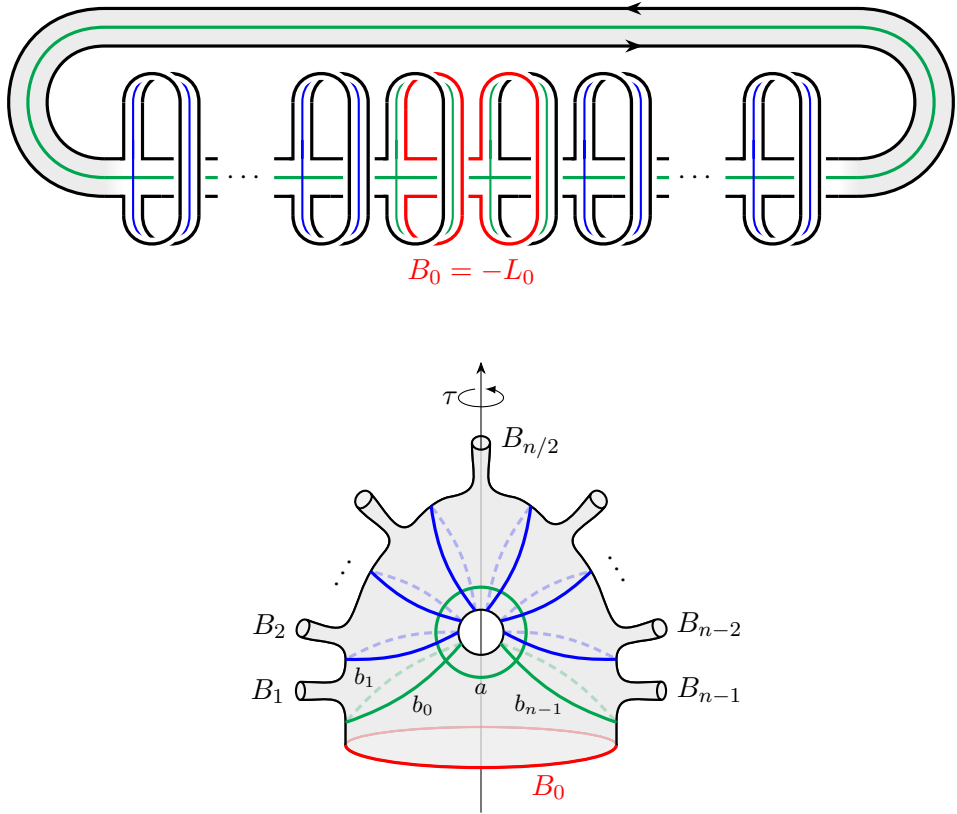

Let $\nu(L_0),\dots,\nu(L_{n-1})$ be disjoint closed tubular neighborhoods of the link components. There are two natural oriented meridian-longitude pairs $(\mu_i,\lambda_i)$ and $(\mu_i',\lambda_i')$ on the boundary torus $-\partial \nu(L_i) = \partial (S^3\setminus \nu(L_i))$ for each $i$. Namely, $\lambda_i$ is the longitude determined by the disk bounded by the oriented component $L_i$, while $\lambda_i'$ is the longitude determined by the corresponding oriented boundary component $B_i$ of the fiber surface $F$, as described in \S\ref{ssec:flowsdehn}. The meridians $\mu_i$ and $\mu_i'$ are then oriented according to
\[ \lambda_i\cdot\mu_i = \lambda_i'\cdot \mu_i' = 1 \]
on $\partial (S^3\setminus \nu(L_i))$. Since the orientation of $B_0$ is opposite that of $L_0$, we have
\begin{equation}\label{eqn:mu}\mu_i' = \begin{cases}
 -\mu_0,&i=0\\
 \mu_i,&i\neq 0.\\
\end{cases}\end{equation}
Moreover, it is clear from Figure \ref{fig:F} that
\begin{equation}\label{eqn:lambda}\lambda_i' = \begin{cases}
 -\lambda_0+2\mu_0,&i=0\\
 \lambda_i,&i=1 \textrm{ or }n-1\\
  \lambda_i+2\mu_i,&\textrm{otherwise.}\\
\end{cases}\end{equation}

Since $\mathbb{L}_n$ is hyperbolic, the monodromy $h$ is freely isotopic to a pseudo-Anosov homeomorphism. We would like to understand the degeneracy slopes of the corresponding suspension pseudo-Anosov flow on the link complement. As discussed in \S\ref{ssec:flowsdehn}, it suffices to determine the fractional Dehn twist coefficient \[c_{B_i}(h)\in\Q\] of $h$ at the boundary component $B_i$, for each $i = 0,\dots, n-1$.

\begin{proposition}
\label{prop:fdtc} We have $c_{B_0}(h) = -1/4$ and $c_{B_i}(h) = 0$ for all $i\neq 0$.
\end{proposition}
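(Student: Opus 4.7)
The plan is to compute each $c_{B_i}(h)$ by analyzing how $h$ acts near the boundary $B_i$ after free isotopy to a pseudo-Anosov representative $h_0$. Recall that $c_{B_i}(h) = k_i/q_i$, where $q_i$ is the number of prongs of the invariant singular foliation of $h_0$ at $B_i$ and $k_i$ measures the rotational part of the boundary identification between $h$ and $h_0$. Equivalently, $c_{B_i}(h)$ equals the translation number of a suitable lift of $h$ to the universal cover of $F$, restricted to a chosen lift of $B_i$, normalized by the deck translation generated by $B_i$.

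For each $i \neq 0$, the plan is to exhibit a properly embedded arc $\alpha_i \subset F$ with an endpoint on $B_i$ such that $h(\alpha_i)$ is freely isotopic to $\alpha_i$ without net winding around $B_i$. Tracing the product $h = D_{b_0}^{-1} D_{b_1} \cdots D_{b_{n-2}} D_{b_{n-1}}^{-1} D_a^{-1}$ on such an arc, most of the Dehn twists act trivially on $\alpha_i$ up to isotopy; the remaining ones (those whose defining curves meet $\alpha_i$ essentially) produce shears which cancel because of the specific sign pattern. The $\Z/2$ rotational symmetry $\tau$ of $F$ shown in Figure~\ref{fig:abstract} (fixing $B_0$ and swapping $B_j \leftrightarrow B_{n-j}$ for $j \neq 0$) commutes with the free isotopy class of $h$ and halves the number of cases to check.

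For $B_0$, a similar arc-tracking computation yields a net negative twist of magnitude $1/4$. The three negative Dehn twists $D_{b_0}^{-1}$, $D_{b_{n-1}}^{-1}$, and $D_a^{-1}$ all act nontrivially near $B_0$, together producing a single prong-step rotation out of the $4$-prong structure at $B_0$. The prong count $q_0 = 4$ can be read off from the local topology of the plumbing near $L_0$ and verified by counting invariant half-leaves of the stable foliation; the prong counts $q_i$ for $i \neq 0$ are similarly identifiable but are not needed beyond knowing $k_i = 0$.

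The main obstacle is that the FDTC is only a quasimorphism, not a homomorphism, on the mapping class group, so the contributions from individual Dehn twists cannot simply be summed. The arc-tracking approach circumvents this by working with the free isotopy class of $h$ as a whole and explicitly producing the free isotopy from $h(\alpha_i)$ back to $\alpha_i$ (modulo boundary rotation). An alternative route would be to construct the pseudo-Anosov representative $h_0$ via the Bestvina--Handel algorithm and read off the FDTCs from its boundary rotation, but the geometric arc-tracking approach is more transparent and makes the sign of $c_{B_0}(h)$ and the vanishing $c_{B_i}(h) = 0$ for $i \neq 0$ manifest.
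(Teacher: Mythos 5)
Your proposal shares some of the paper's ingredients (arc tracking, the involution $\tau$) but diverges in two places, one of which is a genuine gap.

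For $i \neq 0$, the criterion you state is not quite right. You want to find an arc $\alpha_i$ with ``$h(\alpha_i)$ freely isotopic to $\alpha_i$ without net winding around $B_i$,'' but since the free isotopy class of $h$ is pseudo-Anosov, no essential arc is preserved up to isotopy; if one were, it would be part of a reducing system. The correct criterion, and the one the paper uses, is that $c_{B_i}(h)=0$ if and only if $h$ is \emph{neither right-veering nor left-veering} at $B_i$, which is established by exhibiting one arc that $h$ sends to the right and another that $h$ sends to the left (here, an arc meeting one of $b_1,\dots,b_{n-2}$ once versus an arc meeting $a$ once). Your arc-tracking instinct is in the right direction, but the conclusion you want to draw from it is a different and weaker one than ``the image is isotopic to the original.''

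For $i=0$, the gap is more serious. You propose to read off $q_0 = 4$ ``from the local topology of the plumbing'' and then to extract $k_0 = -1$ from a direct arc computation, but neither step is justified, and you yourself flag the obstruction without actually circumventing it: FDTC is only a quasimorphism, so the contributions of $D_{b_0}^{-1}$, $D_{b_{n-1}}^{-1}$, $D_a^{-1}$ near $B_0$ do not sum, and an arc computation alone will not produce a definite fractional value like $-1/4$ without some additional mechanism. The paper sidesteps both issues by passing to the quotient $\bar F = F/\tau$ (a planar surface, with $F \to \bar F$ a double cover branched at two interior points) and using the scaling $c_{B_0}(h) = 2\,c_{\bar B_0}(\bar h)$. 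It then reduces $c_{\bar B_0}(\bar h) = -1/2$ to the integer-shifted statement $c_{\bar B_0}(\bar h^2 \circ D_\delta) = 0$, which is again verified by exhibiting a right-moving and a left-moving arc. This converts the fractional computation into a zero/nonzero dichotomy on a planar surface, which is what makes the argument tractable; your proposal is missing exactly this reduction. Note also that the paper deduces that $q_0$ is a multiple of $4$ \emph{from} $c_{B_0}(h) = -1/4$, not the reverse, so trying to establish $q_0 = 4$ first is the wrong logical order (and genuinely hard without, e.g., computing an invariant train track).
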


\begin{proof}
Suppose first that $i\neq 0$. It is easy to find properly embedded arcs $\alpha_i$ and $\beta_i$ on $F$ with at least one endpoint on $B_i$ such that $h(\alpha_i)$ is to the right of $\alpha_i$ and $h(\beta_i)$ is to the left of $\beta_i$ at $B_i$. For example, we can take $\alpha_i$ to be an arc that intersects one of $b_1,\dots,b_{n-2}$ in a single point and avoids the other curves, and $\beta_i$ to be an arc that intersects $a$ in a single point and avoids the other curves. In other words, $h$ is neither right-veering nor left-veering at $B_i$. It follows that $c_{B_i}(h) = 0$. 

Let us therefore focus on the case $i=0$. It is helpful to note that $h$ commutes with the involution $\tau$ shown in Figure \ref{fig:abstract}.
Let \[\bar F = F/\tau \,\,\, \textrm{ and }\,\, \,\bar h = h/\tau\,\,\, \textrm{ and }\,\,\, \bar B_i = B_i/\tau = B_{n-i}/\tau.\] Then $\bar F$ is a planar surface with boundary \[\partial \bar F = \bar B_0 \cup \dots \cup \bar B_{k=n/2}.\] Note that $F$ is the double cover of $\bar F$ branched along the points $p_1$ and $p_2$ shown in Figure \ref{fig:tildeF}.
\begin{figure}
\begin{tikzpicture}[scale=1.25]
\draw[red,very thick, fill=gray!10, even odd rule] (0,0) ellipse (3.5 and 1.5);
\node[red,above] at (30:3.5 and 1.5) {$\bar{B}_0$};
\draw[very thick, fill=white] \foreach \x in {-2.5,-1.5,0.5} { (\x,0) circle (0.15) };
\node at (-0.75,0) {$\cdots$};
\draw[Green,very thick] (1.5,0) -- (2.5,0);
\draw[very thick, fill=black] (1.5,0) coordinate (p1) circle (0.05) node[left, inner sep=1pt] {\footnotesize$p_1$};
\draw[very thick, fill=black] (2.5,0) coordinate (p2) circle (0.05) node[right, inner sep=2pt] {\footnotesize$p_2$};
\draw[thin,-latex] (-2.25,0.65) node[above,inner sep=1pt] {$\bar{B}_1$} -- ++(-0.2,-0.48);
\draw[thin,-latex] (-1.25,0.65) node[above,inner sep=1pt] {$\bar{B}_2$} -- ++(-0.2,-0.48);
\node[above] at (-0.175,0.65) {\footnotesize$\cdots$};
\draw[thin,-latex] (0.9,0.65) node[above,inner sep=1pt] {$\bar{B}_k$} -- ++(0,-0.65) -- ++(-0.2,0);
\draw[thin,-latex] (2.75,-0.5) node[right, inner sep=1pt] {\footnotesize$x$} to[bend left=30] (2.25,-0.05);
\draw[blue,very thick] (1.5,0.25) arc (90:-90:0.25) -- ++(-1,0) arc (270:90:0.25) node[left,midway,black,inner sep=0.5pt] {\footnotesize$y_{k-1}$} -- ++(1,0);
\draw[blue,very thick] (1.5,0.4) arc (90:-90:0.4) -- ++(-2.85,0) arc (270:90:0.4) node[left,midway,black,inner sep=0pt] {\footnotesize$y_1$} -- ++(2.85,0);
\draw[Green,very thick] (1.5,0.55) arc (90:-90:0.55) -- ++(-4,0) arc (270:90:0.55) node[left,midway,black,inner sep=0pt] {\footnotesize$y_0$} -- ++(4,0);
\draw[orange,very thick] (3.5,0) to[out=180,in=60,looseness=1.5] node[above,pos=0.6,inner sep=2pt] {$\alpha$} (1.5,0);
\draw[orange,very thick] (3.5,0) to[out=180,in=10,looseness=1.1] (1.25,-1) to[out=190,in=315,looseness=1.25] node[above,pos=0.4,inner sep=2pt] {$\beta$} (-2.5,0);
\draw[very thick, fill=white] (-2.5,0) circle (0.15);
\draw[very thick, fill=black] (p1) circle (0.05);
\end{tikzpicture}
\caption{The quotient $\bar{F}$ of $F$ by the involution $\tau$.}
\label{fig:tildeF}
\end{figure}
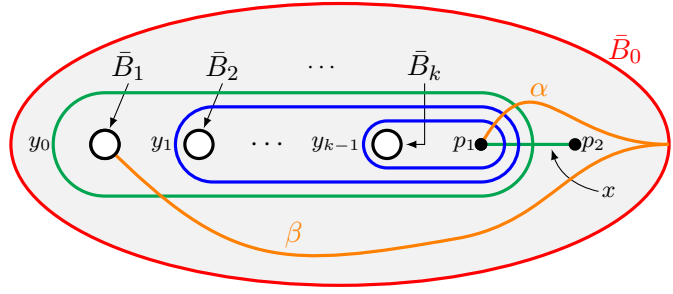
Moreover, $\bar h$ is given by the composition
\[\bar h = D_{y_0}^{-1}\circ D_{y_1}\circ\dots \circ D_{y_{k-1}}\circ \sigma_x^{-1}\]
of Dehn twists with a negative half-twist $\sigma_x^{-1}$ along the arc $x$ from $p_1$ to $p_2$ shown in the figure. We consider this quotient mostly because it is easier to visualize the dynamics of $\bar h$ on this planar surface. Observe that
\[c_{B_0}(h) = -1/4 \iff c_{\bar B_0}(\bar h) = -1/2 \iff c_{\bar B_0}(\bar h^2) = -1 \iff c_{\bar B_0}(\bar h^2\circ D_\delta) = 0, \]
where $\delta$ is a curve in $\bar F$ parallel to $\bar B_0$. For the latter, it suffices to show that $\bar h^2\circ D_\delta(\alpha)$ sends some arc $\alpha$ to the right at $\bar B_0$ and another arc $\beta$ to the left at $\bar B_0$. This is true of the arcs $\alpha$ and $\beta$ in Figure \ref{fig:tildeF}, as indicated in Figure \ref{fig:leftright}.
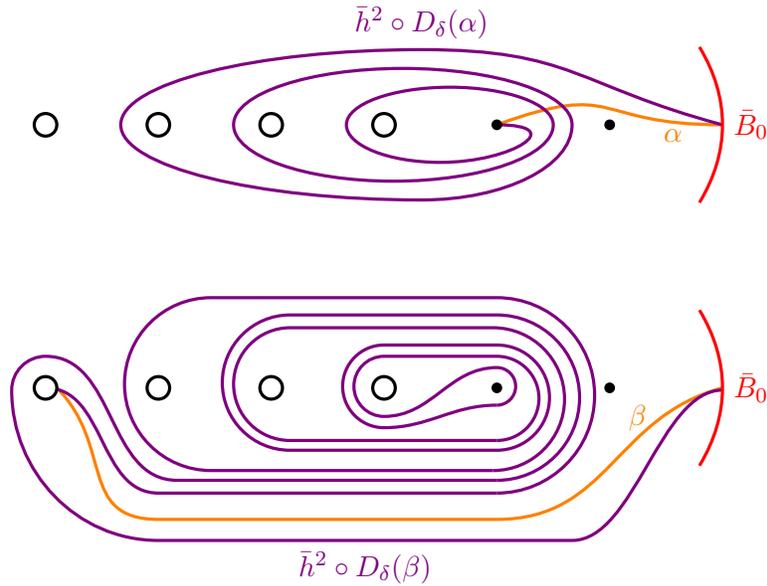
\begin{figure}
\begin{tikzpicture}
\begin{scope}
\draw[orange,very thick] (9,0) to[out=180,in=20,looseness=1.5] node[below,pos=0.15,inner sep=2pt] {$\alpha$} (6,0);
\draw[violet,very thick] (9,0) to[out=165,in=0,looseness=1.25] (5,1) node[above] {$\bar{h}^2 \circ D_\delta(\alpha)$} arc (90:270:4 and 1) arc (270:360:2 and 1) arc (0:180:2.25 and 0.75) arc (180:360:2.125 and 0.75) arc (0:180:1.375 and 0.5) arc (180:270:1.25 and 0.5) to[out=0,in=0,looseness=2.75] (6,0);
\draw[very thick, fill=white] \foreach \x in {0,...,3} { (1.5*\x,0) circle (0.15) };
\draw[very thick, fill=black] (4.5,0) \foreach \x in {1,2} { ++(1.5,0) coordinate (p\x) circle (0.05) };
\draw[very thick, red] ++(-15:9 and 4) arc (-15:15:9 and 4) node[right,midway] {$\bar{B}_0$};
\end{scope}
\begin{scope}[yshift=-3.5cm]
\draw[orange,very thick] (9,0) to[out=195,in=0,looseness=1.1] node[above,pos=0.35,inner sep=2pt] {$\beta$} (6,-1.75) -- ++(-4.5,0) to[out=180,in=315,looseness=1.25] (0.15,0);
\begin{scope}[every path/.append style={violet, very thick}]
\draw (6,-0.7) coordinate (h1b) arc (-90:90:0.56) to[out=180,in=0] ++(-1.5,0) arc (90:270:0.4) to[out=0,in=180] ++(1.5,0.65) arc (90:-90:0.25) coordinate (h1t);
\draw (h1t) to[out=180,in=0] ++(-1.5,-0.3) arc (270:90:0.55) -- ++(1.5,0) arc (90:-90:0.7) -- ++(-2.75,0) arc (270:90:0.9) -- ++(2.75,0) arc (90:-90:1.1) coordinate (h2b);
\draw (h1b) -- ++(-2.75,0) arc (270:90:0.75) -- ++(2.7,0) arc (90:-90:0.95) coordinate (h2t);
\draw (h2t) -- ++(-3.75,0) arc (270:90:1.15) -- ++(3.8,0) arc (90:-90:1.3) coordinate (end-beta);
\draw (end-beta) -- ++(-4.5,0) to[out=180,in=0] (0,0);
\draw (h2b) -- ++(-4.25,0) to[out=180,in=0,looseness=1] ++(-1.75,1.65) arc (90:180:0.45) to[out=270,in=180] ++(1.95,-2) -- node[below] {$\bar{h}^2 \circ D_\delta(\beta)$} ++(5.5,0) to[out=0,in=180,looseness=0.75] ++(2,2);
\end{scope}
\draw[very thick, fill=white] \foreach \x in {0,...,3} { (1.5*\x,0) circle (0.15) };
\draw[very thick, fill=black] (4.5,0) \foreach \x in {1,2} { ++(1.5,0) coordinate (p\x) circle (0.05) };
\draw[very thick, red] ++(-15:9 and 4) arc (-15:15:9 and 4) node[right,midway] {$\bar{B}_0$};
\end{scope}
\end{tikzpicture}
\caption{The images of $\alpha$ and $\beta$ under $\bar{h}^2 \circ D_\delta$.}
\label{fig:leftright}
\end{figure}
\end{proof}

Let $d_i$ be the degeneracy slope of the  pseudo-Anosov flow at the boundary component $L_i$, viewed as an integer multiple of an oriented curve on $\partial (S^3\setminus \nu(L_i))$. Each $d_i$ is determined by $c_{B_i}(h)$ together with the number  of prongs at $B_i$ of the stable foliation of the pseudo-Anosov representative of $h$, as in \S\ref{ssec:flowsdehn}.
If the stable foliation has interior singularities, call them $x_1,\dots, x_m$. Let $q_i$ denote the number of prongs at $B_i$, and let $s_i$ denote the number of prongs at $x_i$. 
\begin{lemma}
\label{lem:prongs}
We have $1\leq q_i \leq n+1$ for each $i$, and $3 \leq s_j \leq n+2$ for each $j$.
\end{lemma}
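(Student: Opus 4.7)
The plan is to apply the Euler--Poincar\'e formula for singular measured foliations on a compact surface with boundary to the stable foliation of the pseudo-Anosov representative $h_0$ of $h$. Since $F$ is a torus with $n$ open disks removed, we have $\chi(F)=-n$, and the Euler--Poincar\'e formula yields the identity
\[
2n \;=\; -2\chi(F) \;=\; \sum_{j=1}^{m}(s_j-2) \;+\; \sum_{i=0}^{n-1} q_i.
\]
From here the desired bounds fall out by isolating one term and using the fact that every interior prong singularity has $s_k-2\ge 1$ (since $s_k\ge 3$) and every boundary component carries at least one prong of the stable foliation, i.e.\ $q_k\ge 1$.

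Specifically, to bound a single $q_i$ I would write $q_i = 2n - \sum_{k\neq i} q_k - \sum_{j}(s_j-2)$, observe that the subtracted quantities are at least $n-1$ and $0$ respectively, and conclude $q_i\le n+1$. The bound on $s_j$ is symmetric: $s_j-2 = 2n - \sum_{k\neq j}(s_k-2) - \sum_i q_i$, and since all $q_i\ge 1$ and the remaining interior contributions are nonnegative, $s_j-2\le 2n - n = n$, hence $s_j\le n+2$. The lower bounds $q_i\ge 1$ and $s_j\ge 3$ are part of the definition of pseudo-Anosov on a surface with boundary.

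The only step that requires a bit of care is justifying the precise form of the Euler--Poincar\'e identity above, namely that a boundary component carrying $q_i$ prongs of the stable foliation contributes $q_i/2$ to $-\chi(F)$. The cleanest way to see this is to cap off each $B_i$ by a disk containing a single $q_i$-pronged singularity to obtain a closed surface $\widehat{F}$ with $\chi(\widehat F)=\chi(F)+n = 0$, and then apply the standard closed-surface Euler--Poincar\'e formula $\chi(\widehat F)=\sum(1-p/2)$ summed over all prong singularities of the extended foliation on $\widehat F$. Rearranging recovers the displayed identity. With that in hand, the lemma is immediate.
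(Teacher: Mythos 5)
Your argument is correct and follows essentially the same route as the paper: cap off each boundary component of $F$ by a disk carrying a $q_i$-pronged singularity to obtain the closed torus $\widehat F$ with $\chi(\widehat F)=0$, apply the Euler--Poincar\'e identity for singular foliations, and then isolate one $q_i$ or one $s_j$ term while bounding the others from below using $q_k\geq 1$ and $s_k\geq 3$. The only cosmetic difference is that you state the identity in the normalized form $\chi(\widehat F)=\sum(1-p/2)$, whereas the paper writes $\sum(2-q_i)+\sum(2-s_j)=\chi(\widehat F)$ (off by a harmless factor of two that is invisible here since $\chi(\widehat F)=0$); your version is actually the more accurate statement of the formula.
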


\begin{proof}
Standard Euler characteristic arguments tell us that
\[\sum_{i=0}^{n-1}(2-q_i) + \sum_{j=1}^m (2-s_j) = \chi(\hat F) = 0,\]
where $\hat F$ is the closed torus obtained from capping off the boundary components. Since each $s_i \geq 3$, we have
\[ \sum_{i=0}^{n-1}(2-q_i) = 2n-\sum_{i=0}^{n-1} q_i \geq 0. \]
Since each $q_i \geq 1$, we have
\[ 2n \geq \sum_{i=0}^{n-1} q_i \geq n-1 + q_i \]
for each $i$. Therefore, each $q_i\leq n+1$ as desired. Similarly, since each $q_i\geq 1$, we have
\[ \sum_{j=1}^m (s_j -2)=\sum_{i=0}^{n-1}(2-q_i) \leq n, \]
which implies for each $j$ that $s_j\leq n+2$ as desired.
\end{proof}

We can now say the following about degeneracy slopes with respect to the $(\mu_i,\lambda_i)$ coordinates.

\begin{lemma}
\label{lem:degeneracy} We have \[d_i = 
\begin{cases}
\ell_i\mu_i,& i\neq 0\\
\ell_0(-6\mu_0 + \lambda_0),& i = 0,
\end{cases}\]
where the $\ell_i$ are integers satisfying $1\leq \ell_i \leq n+1$, for each $i = 0,\dots, n-1$.
\end{lemma}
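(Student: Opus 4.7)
The plan is a direct application of the formula $d_i = q_i \mu_i' + k_i \lambda_i'$ from \eqref{eqn:degfdtc}, combined with the fractional Dehn twist coefficients computed in Proposition~\ref{prop:fdtc} and the change-of-coordinates formulas \eqref{eqn:mu} and \eqref{eqn:lambda}.

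For $i \neq 0$, Proposition~\ref{prop:fdtc} gives $c_{B_i}(h) = 0$, so the integer $k_i$ defined by $c_{B_i}(h) = k_i/q_i$ is zero, and \eqref{eqn:degfdtc} reduces to $d_i = q_i \mu_i' = q_i \mu_i$ after applying \eqref{eqn:mu}. Setting $\ell_i = q_i$, the bound $1 \leq \ell_i \leq n+1$ is then immediate from Lemma~\ref{lem:prongs}.

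For $i = 0$, the identity $c_{B_0}(h) = k_0/q_0 = -1/4$ forces $q_0$ to be a positive multiple of $4$; writing $q_0 = 4\ell_0$, we have $k_0 = -\ell_0$. Substituting into \eqref{eqn:degfdtc} and applying \eqref{eqn:mu} and \eqref{eqn:lambda} yields
\[ d_0 = 4\ell_0 \cdot (-\mu_0) + (-\ell_0)(-\lambda_0 + 2\mu_0) = \ell_0(-6\mu_0 + \lambda_0). \]
Since $\gcd(6,1) = 1$, the curve $-6\mu_0 + \lambda_0$ is primitive, so $\ell_0$ really is the multiplicity of $d_0$. The bound $\ell_0 \geq 1$ holds because $q_0$ must be a positive multiple of $4$ (and $n \geq 4$ makes this consistent with Lemma~\ref{lem:prongs}), while $\ell_0 = q_0/4 \leq (n+1)/4 \leq n+1$ follows from the same lemma.

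There is no real obstacle; this statement is a bookkeeping consequence of Proposition~\ref{prop:fdtc} and Lemma~\ref{lem:prongs}. The only points requiring care are keeping track of the sign flips in the change of meridian--longitude coordinates on $\partial \nu(L_0)$ coming from the orientation reversal on $L_0$, and checking that the linear combination obtained for $d_0$ has coprime coefficients so that $\ell_0$ genuinely records the multiplicity of $d_0$ as a multicurve.
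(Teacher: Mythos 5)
Your proof is correct and matches the paper's argument: apply the degeneracy-slope formula \eqref{eqn:degfdtc} with the fractional Dehn twist coefficients from Proposition~\ref{prop:fdtc}, convert coordinates via \eqref{eqn:mu} and \eqref{eqn:lambda}, and read off the bounds from Lemma~\ref{lem:prongs}. The observations about $q_0$ being divisible by $4$ and about the primitivity of $-6\mu_0+\lambda_0$ are exactly the points the paper uses, so there is no divergence worth noting.
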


\begin{proof}
For $i\neq 0$, the fact that $c_{B_i}(h) = 0$ from Proposition \ref{prop:fdtc} implies, via the formula \eqref{eqn:degfdtc}, that \[d_i = q_i\mu_i' = q_i\mu_i.\] For the proposition, we just let $\ell_i = q_i$ in this case. 
Now suppose $i = 0$. The fact that $c_{B_0}(h) = -1/4$ from Proposition~\ref{prop:fdtc} implies that $q_0$ is a positive multiple of 4, and that
\[ d_0 = q_0(4\mu_0'-\lambda_0')/4 = q_0(-6\mu_0+\lambda_0)/4, \]
again via \eqref{eqn:degfdtc}, where the second equality follows from \eqref{eqn:mu} and \eqref{eqn:lambda}. So we just let $\ell_0 = q_0/4$.
\end{proof}

\begin{proposition}
\label{prop:distinctflows}
Let $M>8n\geq32$ be an odd integer, and let $r_i= M^{i+1}/4$. Then \[\mathbb{L}_n(r_0,\dots,r_{n-1})\] has $n$ distinct pseudo-Anosov flows, no two of which are orbit equivalent.
\end{proposition}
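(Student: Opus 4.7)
The plan is to use the cyclic rotational symmetry of $\mathbb{L}_n$ to produce $n$ candidate pseudo-Anosov flows on $\mathbb{L}_n(r_0,\dots,r_{n-1})$, and then distinguish them by the numbers of prongs at the surgery cores. Let $\rho:S^3\to S^3$ be the rigid orientation-preserving rotation with $\rho(L_i)=L_{i+1\bmod n}$, which sends the standard meridian-longitude pair on $\partial\nu(L_i)$ to that on $\partial\nu(L_{i+1})$. Each power $\rho^k$ then induces a homeomorphism
\[
\rho^k:\mathbb{L}_n(r_0,\dots,r_{n-1})\xrightarrow{\ \cong\ }\mathbb{L}_n(r_{-k\bmod n},r_{1-k\bmod n},\dots,r_{n-1-k\bmod n}).
\]
On each target I would apply Fried surgery using the fixed fibration of $S^3\setminus\mathbb{L}_n$ studied above (with degeneracy slopes $d_0,\dots,d_{n-1}$ from Lemma~\ref{lem:degeneracy}), and then pull back via $\rho^k$ to obtain a flow $\phi_k$ on $\mathbb{L}_n(r_0,\dots,r_{n-1})$. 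Because the fiber $F$ descends to a Birkhoff section for the post-surgery flow, Remark~\ref{rmk:smoothtopological} lets me treat each $\phi_k$ as a genuine pseudo-Anosov flow.

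Next I would compute prong counts. Under $\rho^k$, the surgery core $\gamma_i\subset\mathbb{L}_n(r_0,\dots,r_{n-1})$ corresponds to the core obtained by collapsing $\partial\nu(L_{i+k\bmod n})$ along $r_i$ in the target, so its prong count equals $\Delta(d_{i+k\bmod n},r_i)$. Writing $r_i=M^{i+1}\mu+4\lambda$ on $\partial\nu(L_i)$ and applying Lemma~\ref{lem:degeneracy}, this gives
\[
\Delta(d_{i+k\bmod n},r_i)=\begin{cases}4\ell_{i+k\bmod n},& i+k\not\equiv 0\pmod n,\\ \ell_0\bigl(M^{i+1}+24\bigr),& i+k\equiv 0\pmod n.\end{cases}
\]
So for each $k$ the flow $\phi_k$ has exactly one ``large'' core, namely $\gamma_{-k\bmod n}$, with prong count $\ell_0(M^{((-k)\bmod n)+1}+24)$; every other core has at most $4(n+1)$ prongs, and every interior singular orbit has at most $n+2$ prongs by Lemma~\ref{lem:prongs}. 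Since $M>8n$, the large count exceeds $M+24>8n+24>4(n+1)$ and $n+2$. In particular every $\Delta\geq 4$, so Fried surgery applies and each core is a genuine singular orbit, and the large count is the strict maximum of the multiset of prong counts at singular orbits of $\phi_k$.

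Finally, since orbit equivalence carries singular orbits to singular orbits while preserving prong counts, the maximum prong count at a singular orbit is an orbit-equivalence invariant. For $\phi_k$ this maximum is $\ell_0(M^{e_k}+24)$, where $e_k=((-k)\bmod n)+1$ ranges bijectively over $\{1,2,\dots,n\}$ as $k$ ranges over $\{0,1,\dots,n-1\}$; since $M>1$, the values $M^{e_k}$ are pairwise distinct, so $\phi_0,\dots,\phi_{n-1}$ have pairwise distinct maxima and are pairwise orbit inequivalent.

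I expect the main obstacle to be the careful bookkeeping around the identification $\rho^k$, in particular verifying that only the degeneracy slope $d_0$ has a nontrivial longitudinal component (via $c_{B_0}(h)=-1/4$), so that exactly one prong count per $\phi_k$ scales with $M^{i+1}$ while the rest stay bounded in terms of $n$, and that the exponent realized by this large count is precisely controlled by $k$.
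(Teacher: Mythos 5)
Your proposal is correct and follows essentially the same strategy as the paper: exploit the rotational symmetry of $\mathbb{L}_n$ to produce $n$ flows on (manifolds homeomorphic to) $\mathbb{L}_n(r_0,\dots,r_{n-1})$, compute the prong count at each surgery core via $\Delta(d_j,r_i)$ using Lemma~\ref{lem:degeneracy}, observe that the core corresponding to $L_0$ is the unique singular orbit whose prong count $\ell_0(M^{e}+24)$ dominates all other prong counts (which are bounded by $4(n+1)$ and $n+2$), and then distinguish the flows by the fact that the exponent $e$ runs bijectively over $\{1,\dots,n\}$. The only difference is cosmetic: you pull back via $\rho^k$, while the paper pushes forward from the rotated surgery description with $a_i = r_{i+k \bmod n}$, so your exponent $e_k=((-k)\bmod n)+1$ and the paper's $k+1$ enumerate the same set of $n$ flows in a different order.
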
 

\begin{proof}
Let $(a_0,\dots,a_{n-1})$ be an $n$-tuple of rational numbers. If the distance $\Delta(a_i,d_i)$ between the surgery slope $a_i$ and the degeneracy slope $d_i$ is at least $2$ for each $i=0,\dots,n-1$, then the pseudo-Anosov flow on the link complement extends to a pseudo-Anosov flow on
\[ \mathbb{L}_n(a_0,\dots,a_{n-1}) \]
via Fried surgery (recall per Remark~\ref{rmk:smoothtopological} that we implicitly have in mind the genuine pseudo-Anosov flow orbit equivalent to the topological flow obtained via Fried surgery).  Here the cores $\gamma_0,\dots,\gamma_{n-1}$ of the surgery solid tori are closed orbits, where the number of prongs at $\gamma_i$ is given by $\Delta(a_i,d_i)$, as described in \S\ref{ssec:flowsdehn}. All other singular orbits come from suspending the interior singularities $x_1,\dots,x_m$ of the stable foliation of the pseudo-Anosov representative of $h$ (if there are any), and the number of prongs at the suspension of $x_i$ is given by \[s_i\leq n+2 \leq M,\] where the first inequality comes from Lemma \ref{lem:prongs}.

Now fix any integer $0\leq k\leq n-1$ and let $a_i = r_{i+k}$, with subscripts taken mod ${n}$. Note that
\[\mathbb{L}_n(a_0,\dots,a_{n-1}) \cong \mathbb{L}_n(r_0,\dots,r_{n-1}),\]
by the symmetry of $\mathbb{L}_n$ under rotation of its components. We have
\[ \Delta(a_i, d_i) = \Delta(r_{i+k}, d_i) = |(M^{[i+k]+1}\mu_i +4\lambda_i) \cdot d_i|, \]
where $[i+k]$ refers to the unique representative in $\{0,\dots,n-1\}$ of the mod $n$ residue class of $i+k$. By Lemma \ref{lem:degeneracy}, this is equal to
\[ 4\ell_i\leq 4n+4<M \]
when $i\neq 0$, and to
\[ |(M^{k+1}\mu_0 +4\lambda_0) \cdot \ell_0(-6\mu_0+\lambda_0)| = \ell_0(M^{k+1} + 24) \geq M^{k+1} + 24>M \]
when $i=0$. Since these distances are all at least $4\geq 2$, the pseudo-Anosov flow on the link complement extends to a pseudo-Anosov flow on $\mathbb{L}_n(a_0,\dots,a_{n-1})$.  Moreover, $\gamma_0$ is the unique singular orbit of this flow with the most prongs, namely $\ell_0(M^{k+1}+24)$. The fact that the numbers $M^{k+1} + 24$ are distinct as $k$ ranges from $0$ to $n-1$ then shows that the induced flows on $\mathbb{L}_n(r_0,\dots,r_{n-1})$ are mutually orbit inequivalent as $k$ ranges from $0$ to $n-1$.
\end{proof}
 
\begin{remark}
\label{rmk:int}
We record here the observation in the above proof that $\Delta(r_i,d_i) \geq 4\geq 3$ for each $i$.
\end{remark}
 
\begin{remark}
Although we will not need it, one can show by computing an invariant train track for $h$ that the stable foliation of its pseudo-Anosov representative has no interior singularities, and that $s_0 = 4$, $s_1 = s_{n-1} = 1$, and $s_i = 2$ for all other $i$. Then $\ell_0 = 1$, $\ell_1 = \ell_{n-1} = 1$, and $\ell_i = 2$ for all other $i$, from which it follows that Proposition~\ref{prop:distinctflows} holds for any odd $M\geq 9$.
\end{remark}

\begin{lemma}
\label{lem:L-space} The manifolds in Proposition~\ref{prop:distinctflows} have first homology of odd order, which grows to infinity as $M$ does, and they are L-spaces when $M$ is sufficiently large.
\end{lemma}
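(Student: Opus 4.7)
The plan is to read off $H_1$ from a standard surgery presentation matrix for a rational Dehn filling, verify oddness and unbounded growth with a one-line reduction modulo $2$ and a leading-term comparison, and then invoke the rational-surgery L-space criterion proved in \S\ref{sec:rational-surgeries}.

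Taking the meridians $\mu_0,\dots,\mu_{n-1}$ as a basis for $H_1$ of the exterior of $\mathbb{L}_n$, the Seifert longitudes are $\lambda_i = \sum_{j\ne i} lk(L_i,L_j)\,\mu_j$, and the surgery relations $p_i\mu_i + q_i\lambda_i = 0$ with $(p_i,q_i)=(M^{i+1},4)$ yield the presentation matrix
\[ A \;=\; \operatorname{diag}(M, M^2, \dots, M^n) + 4\Lambda, \]
where $\Lambda$ denotes the integer linking matrix of $\mathbb{L}_n$ (which has $\Lambda_{ii}=0$). Since $M$ is odd, the diagonal part of $A$ is congruent to $I$ modulo $2$ and $4\Lambda \equiv 0 \pmod 2$, so $A \equiv I \pmod 2$ and $\det A$ is odd---hence nonzero, so the surgery is a rational homology sphere with $|H_1|=|\det A|$ odd. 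To see growth, I would expand $\det A$ as a polynomial in $M$: the identity permutation contributes the monomial $\prod_{i=0}^{n-1} M^{i+1} = M^{n(n+1)/2}$, while every other permutation replaces at least one diagonal entry $M^{i+1}$ by a constant $\pm 4$ and therefore produces a monomial of strictly smaller $M$-degree. Thus $|H_1|$ is dominated by $M^{n(n+1)/2}$, and in particular tends to infinity as $M\to\infty$.

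For the L-space property, I would invoke Liu's result \cite{liu-lspace-links} that $\mathbb{L}_n$ is an L-space link for $n\geq 3$, together with the extension of the large-surgery theorem to rational slopes that will be established in \S\ref{sec:rational-surgeries}: any sufficiently large positive rational surgery on an L-space link is an L-space. Since $r_i = M^{i+1}/4 \geq M/4$ for every $i$, all slopes simultaneously exceed any prescribed threshold once $M$ is large, so the Dehn filling is an L-space. The only non-routine ingredient is the rational-surgery L-space criterion itself; the two homological assertions reduce to the elementary determinant observations above.
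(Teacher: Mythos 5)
Your proof is correct and follows essentially the same strategy as the paper's: a presentation matrix congruent to $I$ mod~$2$ for oddness, a determinant lower bound for growth, and Liu's L-space link result combined with Proposition~\ref{prop:lspace-rational} for the L-space claim. The only genuine divergence is in the growth step. The paper invokes Ostrowski's diagonal-dominance inequality to get the explicit lower bound $|H_1|\geq\prod_{i=0}^{n-1}(M^{i+1}-8)$, whereas you observe that $\det A$ is a monic polynomial in $M$ of degree $n(n+1)/2$ (the identity permutation contributes $M^{n(n+1)/2}$ and every other permutation drops at least two diagonal factors), so $|\det A|\to\infty$. Both arguments are elementary; yours avoids citing Ostrowski but gives only asymptotic rather than explicit information, which the paper also does not need downstream, so the trade-off is cosmetic. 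One small inaccuracy worth fixing: the off-diagonal entries of your matrix are $4\ell_{ij}$, and for the chain link $\ell_{ij}\in\{0,\pm 1\}$, so the replaced entries are $0$ or $\pm 4$, not always $\pm 4$; this does not affect the degree argument but the statement as written is slightly off.
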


\begin{proof}
The claim about $H_1(\mathbb{L}_n(r_0,\dots,r_{n-1});\Z)$ is a special case of the following more general fact: if $\mathbb{L} = L_0 \cup \dots \cup L_{n-1}$ is an $n$-component link, and we write $a_i = p_i/q_i$ in lowest terms for $i=0,\dots,n-1$, then $H_1(\mathbb{L}(a_0,\dots,a_{n-1});\Z)$ has odd order whenever all of the $q_i$ are even.  Indeed, if we let $\ell_{ij} = \operatorname{lk}(L_i,L_j)$, then this homology is presented by the matrix
\[ A = \begin{pmatrix}
p_0 & q_0 \ell_{01} & q_0 \ell_{02} & \dots & q_0 \ell_{0,n-1} \\
q_1\ell_{10} & p_1 & q_1 \ell_{12} & \dots & q_1 \ell_{1,n-1} \\
q_2\ell_{20} & q_2 \ell_{21} & p_2 & \dots & q_2 \ell_{2,n-1} \\
\vdots & \vdots & \vdots & \ddots & \vdots \\
q_{n-1}\ell_{n-1,0} & q_{n-1} \ell_{n-1,1} & q_{n-1}\ell_{n-1,2} & \dots & p_{n-1}
\end{pmatrix}, \]
and each $p_i$ is odd since it is coprime to $q_i$, so we have $A \equiv I_n \pmod{2}$ and thus
\[ \left| H_1(\mathbb{L}(a_0,\dots,a_{n-1});\Z) \right| = |{\det}(A)| \equiv \det(I_n) = 1 \pmod{2}. \]
In the case of $\mathbb{L}_n(r_0,\dots,r_{n-1})$, we have $r_i = M^{i+1}/4$ in lowest terms since $M$ is odd, so $p_i = M^{i+1}$ and $q_i=4$ and the claim follows.

The order of this homology can also be bounded from below using Ostrowski's inequality \cite{ostrowski}, which says that if each difference
\[ h_i = |p_i| - \sum_{j\neq i} |q_i \ell_{ij}| \]
between the diagonal entry and the other entries in the $i$th row of $A$ is strictly positive, then $|{\det}(A)| \geq h_0h_1\dots h_{n-1}$.  The positivity condition is equivalent to $|a_i| > \sum_{j\neq i} |\ell_{ij}|$ for each $i$, and for $\mathbb{L}_n(r_0,\dots,r_{n-1})$ this is simply $|r_i| > 2$; since each $r_i = M^{i+1}/4$ is greater than $2$, we have
\[ |H_1(\mathbb{L}_n(r_0,\dots,r_{n-1});\Z)| \geq \prod_{i=0}^{n-1} (M^{i+1}-8), \]
which is an increasing, unbounded function of $M>8$.

For the L-space claim, Liu \cite[Example~3.15]{liu-lspace-links} showed that for each $n\geq 3$ the link $\mathbb{L}_n$ is an L-space link, meaning that there is some constant $C = C(\mathbb{L}_n)$ such that $\mathbb{L}_n(a_0,\dots,a_{n-1})$ is an L-space for any collection of integers $a_i \geq C$.  In Proposition~\ref{prop:lspace-rational} we show that this implies the existence of $C' = C'(\mathbb{L}_n)$ such that $\mathbb{L}_n(a_0,\dots,a_{n-1})$ is an L-space for any rational slopes $a_i \geq C'$, and then for $Y = \mathbb{L}_n(\frac{M}{4},\dots,\frac{M^n}{4})$ as above it suffices to take $M > 4C'$.
\end{proof}

\begin{lemma}
\label{lem:hyperbolic} The manifolds in Proposition~\ref{prop:distinctflows} are hyperbolic for $M$ sufficiently large.
\end{lemma}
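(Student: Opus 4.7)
The plan is to apply Thurston's hyperbolic Dehn surgery theorem to the link complement $S^3 \setminus \mathbb{L}_n$. Since $\mathbb{L}_n$ is hyperbolic by \cite{neumann-reid}, this complement carries a complete finite-volume hyperbolic metric with $n$ cusps. Thurston's theorem furnishes, for each boundary torus $\partial\nu(L_i)$, a finite set $E_i$ of exceptional slopes such that every Dehn filling $\mathbb{L}_n(s_0,\dots,s_{n-1})$ with $s_i \notin E_i$ for all $i$ yields a closed hyperbolic 3-manifold. Equivalently, if every filling slope $p_i/q_i$ (in lowest terms) has $p_i^2 + q_i^2$ larger than some uniform constant depending on $\mathbb{L}_n$, the filling is hyperbolic.

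The next step is to verify that the slopes $r_i = M^{i+1}/4$ meet this criterion for large odd $M$. Since $M$ is odd, $\gcd(M^{i+1},4) = 1$, so in the $(\mu_i,\lambda_i)$ basis each $r_i$ is represented by the primitive pair $(M^{i+1},4)$. As $M \to \infty$, the quantity $M^{2(i+1)} + 16$ tends to infinity for every $i$, so for all sufficiently large $M$ each $r_i$ avoids the finite exceptional set $E_i$. Applying Thurston's theorem then yields hyperbolicity of $\mathbb{L}_n(r_0,\dots,r_{n-1})$ for all such $M$, as required.

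There is no substantive obstacle: this is a routine application of hyperbolic Dehn surgery to a family of slopes tending to infinity at every cusp. If one wanted an explicit lower bound on $M$, one could instead invoke the $6$-theorem of Agol and Lackenby, which gives hyperbolicity whenever each filling curve has length exceeding $6$ in a maximal horocusp; but since the lemma asks only for ``$M$ sufficiently large,'' Thurston's theorem suffices.
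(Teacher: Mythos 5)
Your proof is correct and takes essentially the same route as the paper: since $\mathbb{L}_n$ is hyperbolic, there are finitely many exceptional slopes at each cusp, and the slopes $r_i = M^{i+1}/4$ escape these finite sets as $M\to\infty$. The only difference is citation: the paper invokes the quantitative Hodgson--Kerckhoff theorem (at most $114$ exceptional slopes per cusp, giving an explicit bound $M > 4\max_i(\max S_i)$), whereas you invoke Thurston's original qualitative Dehn surgery theorem and note the $6$-theorem as an alternative; for ``$M$ sufficiently large'' either suffices.
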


\begin{proof}
The $n$-chain link $\mathbb{L}_n$ is hyperbolic for all $n\geq 3$ by \cite[Theorem~5.1(ii)]{neumann-reid}.  A theorem of Hodgson and Kerckhoff \cite[Theorem~5.12]{hodgson-kerckhoff} therefore says that each Dehn surgery on $\mathbb{L}_n$ is hyperbolic as long as the surgery slopes are sufficiently long.  Concretely, for each $i=0,\dots,n-1$ there is a set $S_i \subset \Q\cup\{\infty\}$ of size at most 114 such that $\mathbb{L}_n(a_0,\dots,a_{n-1})$ is hyperbolic as long as $a_i \not\in S_i$ for all $i$.  (In fact, since the symmetry group of $\mathbb{L}_n$ acts transitively on its components, we can take $S_0=\dots=S_{n-1}$.)  It then suffices to take $M > 4\max_i \left(\max S_i\right)$.
\end{proof}

\begin{lemma}
\label{lem:Lspacetaut} An L-space with odd-order first homology has no taut foliation.
\end{lemma}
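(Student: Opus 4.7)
The plan is essentially already sketched in the introduction: combine the theorem of Ozsv\'ath--Szab\'o that L-spaces admit no co-orientable taut foliation with a covering-space argument that rules out non-co-orientable taut foliations when $H_1$ has odd order.

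More concretely, suppose for contradiction that $Y$ is an L-space with $|H_1(Y;\Z)|$ odd and that $Y$ admits a taut foliation $\mathcal{F}$. The argument splits into two cases according to whether $\mathcal{F}$ is co-orientable. If $\mathcal{F}$ is co-orientable, then this directly contradicts the theorem of Ozsv\'ath--Szab\'o that an L-space admits no co-orientable taut foliation (combined with Eliashberg--Thurston perturbation and the obstruction coming from Heegaard Floer homology).

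If $\mathcal{F}$ is not co-orientable, the plan is to pass to the orientation double cover $\pi:\tilde Y\to Y$ of the tangent plane field of $\mathcal{F}$. This is a genuine connected double cover precisely because $\mathcal{F}$ fails to be co-orientable, and on $\tilde Y$ the pulled-back foliation $\pi^*\mathcal{F}$ is co-orientable and still taut. The key observation is then that the existence of a connected double cover of $Y$ is equivalent to the existence of a surjection $H_1(Y;\Z)\twoheadrightarrow \Z/2$, i.e.\ to the nonvanishing of $H_1(Y;\Z/2)$. But $|H_1(Y;\Z)|$ odd forces $H_1(Y;\Z/2)=0$ by the universal coefficient theorem, so no such connected double cover exists, which is a contradiction.

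This reduces the entire argument to one algebraic observation (odd-order abelian groups have no index-2 subgroup) plus two cited inputs (the Ozsv\'ath--Szab\'o obstruction and the standard construction of the orientation double cover of a foliation). There is no real obstacle here: the only thing to be careful about is asserting that the orientation double cover of a taut foliation is again taut, which is immediate since tautness is a local transverse property preserved by covering maps.
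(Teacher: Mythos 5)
Your argument is correct and is essentially identical to the paper's: both rule out the co-orientable case by the Ozsv\'ath--Szab\'o obstruction and handle the non-co-orientable case by passing to the connected double cover on which the pulled-back foliation becomes co-orientable, then observing that odd-order $H_1$ forbids any connected double cover. (The paper phrases the double cover as the unit sphere bundle of the normal line bundle to the leaves, which for an oriented $3$-manifold coincides with your ``orientation double cover of the tangent plane field.'')
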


\begin{proof}
Suppose $Y$ is an L-space with odd-order first homology. Then $Y$ cannot have a co-orientable taut foliation \cite{osz-genus,bowden,kazez-roberts}. Suppose that $Y$ has a taut foliation $\mathscr{F}$ that is not co-orientable. Fix a metric on $Y$ and consider the line bundle $\pi:L\to Y$ whose fiber at each point $p\in Y$ is the subspace of $T_pY$ normal to the leaf of $\mathscr{F}$ containing $p$. Let $Y'$ be the unit sphere bundle of $L$. Then $\pi: Y'\to Y$ is a double covering, and the lift $\pi^{-1}(\mathscr{F})$ to $Y'$ is a taut foliation with a natural co-orientation, given at the point $(p,v)\in Y'$ by $v$. Note that $Y'$ must be connected, since otherwise it would simply be two copies of $Y$, and a co-orientation on $\pi^{-1}(\mathscr{F})$ would give a co-orientation on $\mathscr{F}$. On the other hand, connected double covers of $Y$ correspond to index-2 subgroups of $\pi_1(Y),$ which do not exist when $H_1(Y;\Z)$ has odd order, a contradiction.
\end{proof}

\begin{proof}[Proof of Theorem \ref{thm:main}]
Proposition \ref{prop:distinctflows} provides quarter-integer fillings of $\mathbb{L}_n$ with $n$ orbit inequivalent pseudo-Anosov flows, for each even $n\geq 4$. Lemmas \ref{lem:L-space} and \ref{lem:hyperbolic} say that for $M$ sufficiently large, these fillings are hyperbolic L-spaces with odd-order first homology, and therefore do not have taut foliations by Lemma \ref{lem:Lspacetaut}. Finally, the fact in Lemma \ref{lem:L-space} that the order of first homology grows to infinity as $M$ does implies that these fillings produce infinitely many distinct 3-manifolds.
\end{proof}

\begin{proof}[Proof of Theorem \ref{thm:qg}]
This follows from Fenley's result in Theorem \ref{thm:fenley} that a pseudo-Anosov flow without perfect fits is quasigeodesic, combined with Theorem \ref{thm:main} and the fact that the pseudo-Anosov flows in Proposition \ref{prop:distinctflows} do not have perfect fits by Proposition \ref{prop:surgery-no-perfect-fits} and Remark \ref{rmk:int}.
\end{proof}

\begin{lemma}
\label{lem:birkhoff}
The slope $p\mu_i + q\lambda_i$ intersects the fiber and degeneracy slopes $\lambda_i'$ and $d_i$ with opposite signs, for any rational $p/q>2$.
\end{lemma}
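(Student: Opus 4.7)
The plan is a direct calculation using the algebraic intersection pairing on each boundary torus $-\partial\nu(L_i)$. With the convention $\lambda_i\cdot\mu_i=1$ fixed earlier, for any integers $a,b,p,q$ one has
\[ (p\mu_i+q\lambda_i)\cdot(a\mu_i+b\lambda_i)=qa-pb. \]
After reversing orientation if necessary I may assume $q\geq 0$, and then the hypothesis $p/q>2$ forces $q>0$ and $p>2q>0$.

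First I would handle the case $i\neq 0$. Lemma~\ref{lem:degeneracy} gives $d_i=\ell_i\mu_i$, so $(p\mu_i+q\lambda_i)\cdot d_i=q\ell_i$, which is strictly positive since $\ell_i\geq 1$ and $q>0$. The formula~\eqref{eqn:lambda} for $\lambda_i'$ then splits into two subcases. If $i\in\{1,n-1\}$ then $\lambda_i'=\lambda_i$, and the intersection pairs to $-p<0$. Otherwise $\lambda_i'=\lambda_i+2\mu_i$, and the intersection is $2q-p$, which is strictly negative precisely because $p>2q$. In each subcase the two intersection numbers have opposite signs, as required. Note that this last subcase is the only place where the hypothesis $p/q>2$, rather than the weaker $p/q>0$, is actually used.

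For $i=0$ I would combine \eqref{eqn:mu} and \eqref{eqn:lambda} to write $\lambda_0'=2\mu_0-\lambda_0$, and use Lemma~\ref{lem:degeneracy} to write $d_0=\ell_0(-6\mu_0+\lambda_0)$. The same formula yields
\[ (p\mu_0+q\lambda_0)\cdot\lambda_0'=p+2q>0\quad\text{and}\quad (p\mu_0+q\lambda_0)\cdot d_0=-\ell_0(p+6q)<0, \]
so these also have opposite signs.

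There is no real obstacle: this is a finite case check once the various conventions are pinned down. The only thing I would be careful about is the sign of the intersection pairing on the two sides of the torus, i.e.~on $-\partial\nu(L_i)$ versus $\partial\nu(L_i)$, since the meridians $\mu_i$ (and $\mu_i'$) were defined using the former orientation.
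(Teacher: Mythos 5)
Your proof is correct and follows the same direct computation as the paper, pairing $p\mu_i + q\lambda_i$ against the explicit formulas for $\lambda_i'$ and $d_i$ from Lemma~\ref{lem:degeneracy} and equations~\eqref{eqn:mu}, \eqref{eqn:lambda}; the intersection numbers you obtain in each case agree with those in the paper. The only differences are cosmetic: you make explicit the pairing convention and the normalization to $q>0$, and you observe which subcase actually uses $p/q>2$ rather than merely $p/q>0$.
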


\begin{proof}
This follows immediately from the calculations
\[(p\mu_i + q\lambda_i)\cdot \lambda_i' = 
\begin{cases}
p+2q,& i = 0\\
-p,& i = 1\textrm{ or } n-1\\
-p+2q,& \textrm{otherwise}
\end{cases}\] and 
\[(p\mu_i + q\lambda_i)\cdot d_i = 
\begin{cases}
\ell_0(-p-6q),& i = 0\\
\ell_iq,& \textrm{otherwise},
\end{cases}\]
which follow directly from Lemma \ref{lem:degeneracy} together with \eqref{eqn:mu} and \eqref{eqn:lambda}.
\end{proof}

\begin{proof}[Proof of Theorem \ref{thm:UT}]
For each even $n\geq 4$, Theorem~\ref{thm:main} furnishes via Proposition~\ref{prop:distinctflows} infinitely many hyperbolic L-spaces with $n$ orbit inequivalent pseudo-Anosov flows. These pseudo-Anosov flows  have negative Birkhoff sections per the discussion in \S\ref{ssec:flowsdehn}, since the slope \[r_i = M^{i+1}/4 > 2\] intersects $\lambda'_i$ and $d_i$ in opposite signs for each $i$, according to Lemma~\ref{lem:birkhoff}. The corresponding pseudo-Anosov flows on \[-\mathbb{L}_n(r_0,\dots,r_n)\] therefore have positive Birkhoff sections by Remark~\ref{rmk:posneg}. These  flows then give rise to $n$ universally tight positive contact structures $\xi_1,\dots,\xi_n$ on $-\mathbb{L}_n(r_0,\dots,r_n)$ per Corollary~\ref{cor:universallytight}. Moreover, the fact that these flows are orbit inequivalent implies via Proposition~\ref{prop:rsft} that these contact structures are mutually non-contactomorphic. Since the pseudo-Anosov flows are distinguished by the maximal number of prongs at a singular orbit, their lifts to any finite cover are also orbit inequivalent. This then implies that the lifts of $\xi_1,\dots,\xi_n$ are non-contactomorphic, again by Proposition~\ref{prop:rsft}.
\end{proof}

\begin{proof}[Proof of Theorem~\ref{thm:knot}] Suppose $K$ is a hyperbolic L-space knot of genus $g$, and let $r=p/q>4g$. Then $S^3_r(K)$ is hyperbolic by \cite{ni-exceptional}, and it is an L-space since $r\geq 2g-1$. Since $K$ is hyperbolic, it has pseudo-Anosov monodromy. Consider the associated suspension pseudo-Anosov flow on the complement of $K$. The degeneracy slope of this flow is of the form $a\mu + b\lambda$, where $a/b\leq 4g-2$. The distance between the degeneracy slope and $r$ is thus at least 3, so Proposition~\ref{prop:surgery-no-perfect-fits} says that Fried surgery produces a pseudo-Anosov flow on $S^3_r(K)$ with no perfect fits. By Corollary~\ref{cor:universallytight}, there is a corresponding universally tight contact structure.
\end{proof}

\appendix

\section{Large rational surgeries on L-space links} \label{sec:rational-surgeries}

A link $\mathbb{L} \subset S^3$ with $n$ components is called an \emph{L-space link} \cite{gorsky-nemethi} if there is some $C=C(\mathbb{L})$ such that for every choice of \emph{integers} $a_1,\dots,a_n \geq C$, the Dehn surgery \[ \mathbb{L}(a_1,\dots,a_n) \] is an L-space.  In fact, this is still true if we allow the $a_i$ to be rational.

\begin{proposition} \label{prop:lspace-rational}
Let $\mathbb{L}$ be an L-space link with $n$ components.  Then there is an integer $C'=C'(\mathbb{L})$ such that \[ \mathbb{L}(a_1,\dots,a_n) \] is an L-space for every choice of \emph{rational} slopes $a_1,\dots,a_n \geq C'$.
\end{proposition}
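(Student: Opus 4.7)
The plan is to reduce the rational-surgery statement to the integer-surgery hypothesis via a component-by-component Farey-neighbor induction using the Ozsv\'ath--Szab\'o surgery exact triangle. The key single-component ingredient is the following lemma: if $M$ is a compact oriented $3$-manifold with a distinguished torus boundary $T$ (and possibly other boundary tori already filled along fixed slopes), and $M(a)$ is an L-space for every integer $a \geq N$ on $T$, then $M(p/q)$ is an L-space for every rational $p/q \geq N+1$ on $T$. Granting this, the proposition follows by applying the lemma iteratively to each link component in turn: starting from the L-space link hypothesis with integer threshold $C$, convert the slope on $L_1$ from an integer $\geq C$ to a rational $\geq C+1$, then do the same on $L_2$ (treating the already-rational slope on $L_1$ as a fixed partial filling), and so on; after $n$ iterations we obtain $C' = C+1$.

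To prove the single-component lemma, I induct on $q$, strengthening the claim to: for every coprime pair $(p,q)$ with $q \geq 1$ and $p/q \geq N+1-1/q$, the filling $M(p/q)$ is an L-space. The base case $q=1$ reduces to the integer hypothesis since $p \geq N$. For $q \geq 2$, the Farey neighbors $p'/q'$ and $p''/q''$ of $p/q$ (the two rationals in lowest terms with $q'+q''=q$, $p'+p''=p$, and $|p'q''-p''q'|=1$) both have denominators strictly less than $q$; a short calculation using $q \geq q'+1$ verifies that both satisfy $p'/q' \geq N+1-1/q'$ and $p''/q'' \geq N+1-1/q''$, so the inductive hypothesis says $M(p'/q')$ and $M(p''/q'')$ are L-spaces. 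The three slopes $p/q, p'/q', p''/q''$ form a triangle in the Farey tessellation (pairwise intersection number $1$), so the surgery exact triangle for $\HFhat$ over $\F_2$ yields
\[ \rank \HFhat(M(p/q)) \leq \rank \HFhat(M(p'/q')) + \rank \HFhat(M(p''/q'')) = |H_1(M(p'/q'))| + |H_1(M(p''/q''))|. \]
Multilinearity of the determinant of the integer-homology presentation matrix of $M(\,\cdot\,)$ in the row coming from the slope on $T$ (whose entries are linear in $(p,q)$), together with a sign check, yields $|H_1(M(p/q))| = |H_1(M(p'/q'))| + |H_1(M(p''/q''))|$. Combined with the universal bound $\rank \HFhat \geq |H_1|$, this forces equality at $p/q$, so $M(p/q)$ is an L-space.

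The main obstacle I expect is justifying the additivity of $|H_1|$ across the Farey triangle with consistent signs, especially in the presence of previously-filled boundary tori: one must check that none of the three Dehn fillings accidentally has positive first Betti number while the others do not, so that all three $|H_1|$ values are genuinely finite and add with a common sign. For the specific surgeries here (all slopes $\geq C+1 > 2$ are positive, and the integer case is already known to produce L-spaces, hence rational homology spheres), this is ensured by an explicit computation of the surgery presentation matrix, in which the chosen slope enters as a single row with entries linear in $(p,q)$. Once this linear-algebra point is in hand, the Farey induction and the component-by-component iteration both close cleanly with the uniform constant $C' = C+1$.
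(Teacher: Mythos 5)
Your overall structure --- a Farey-type induction on denominators using the surgery exact triangle and $|H_1|$ additivity --- is essentially the paper's, though you package it as a single-component lemma iterated over components while the paper runs a minimization-and-contradiction argument over the full tuple at once (first minimizing the largest non-integer index, then the denominator there). Your verification that the strengthened bound $p/q \geq N+1-1/q$ propagates to both Farey parents is correct, and the component-by-component reduction is a valid reorganization.

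There is, however, a genuine gap in the claim that $C' = C+1$ suffices, located precisely at what you flag as ``the main obstacle.'' The additivity $|H_1(Y)| = |H_1(Y')| + |H_1(Y'')|$ requires not merely that all three fillings be rational homology spheres: since the surgery determinant $f$ is affine in the filling coordinate and satisfies $q\,f(p/q) = q'\,f(p'/q') + q''\,f(p''/q'')$, the absolute values add only when the two summands on the right have the \emph{same sign}. The L-space-link hypothesis guarantees nonvanishing of the determinant at integer slopes $\geq C$, but says nothing about its sign, and the sign can change between consecutive integers if the linking numbers are large relative to $C$: for a two-component link with linking number $\ell$ the determinant is $a_1a_2 - \ell^2$, negative when $C \leq a_1, a_2 < \ell$. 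In that regime the Farey triangle inequality gives $\rank\HFhat(Y) \leq |H_1(Y')|+|H_1(Y'')|$, but the right side is strictly larger than $|H_1(Y)|$ and the argument stalls (or, worse, the middle filling may fail to be a rational homology sphere at all). Your proposed fix --- noting that integer fillings are already known to be L-spaces, hence QHS --- rules out vanishing of the determinant at integers but not sign changes between them, so $C'=C+1$ won't do. To close the gap you need a determinant-positivity threshold depending on the linking matrix, which is exactly the role of Lemma~\ref{lem:determinant-positive} in the paper; the correct conclusion is $C' = \max(C,M)$ with $M$ that positivity bound, not $C+1$.
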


To set the stage, we let $\ell_{ij}$ be the linking number of the $i$th and $j$th components of $\mathbb{L}$.  We then define
\[ f(x_1,\dots,x_n) = \det\begin{pmatrix}
x_1 & \ell_{12} & \ell_{13} & \dots & \ell_{1n} \\
\ell_{21} & x_2 & \ell_{23} & \dots & \ell_{2n} \\
\ell_{31} & \ell_{32} & x_3 & \dots & \ell_{3n} \\
\vdots & \vdots & \vdots & \ddots & \vdots \\
\ell_{n1} & \ell_{n2} & \ell_{n3} & \dots & x_n
\end{pmatrix}, \]
which has the property that if we write $a_i = \frac{p_i}{q_i}$ in lowest terms, with $q_i > 0$, then
\begin{equation} \label{eq:homology-L-surgery}
\left| H_1(\mathbb{L}(a_1,\dots,a_n); \Z) \right| =  \left| q_1q_2\dots q_n \cdot f(a_1,\dots,a_n) \right|.
\end{equation}
Moreover, expanding by minors along the $i$th row shows that $f$ is an affine function in each of the $x_i$: we can write
\begin{equation} \label{eq:f-det-affine}
f(x_1,\dots,x_n) = g_i(x_1,\dots,\widehat{x_i},\dots,x_n) \cdot x_i + h_i(x_1,\dots,\widehat{x_i},\dots x_n),
\end{equation}
where the notation ``$\widehat{x_i}$'' means that we have omitted $x_i$ from the list of inputs.

\begin{lemma} \label{lem:determinant-positive}
There exists $M = M(\mathbb{L}) > 0$ such that if $x_i \geq M$ for all $i$, then $f(x_1,\dots,x_n) > 0$.
\end{lemma}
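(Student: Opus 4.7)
The plan is to exploit the multilinear structure of the determinant: since the variables $x_1,\dots,x_n$ appear only on the diagonal of the defining matrix, $f$ is affine in each $x_i$ separately, so it can be written in the monomial basis of squarefree monomials as
\[
f(x_1,\dots,x_n) \;=\; \sum_{S \subseteq \{1,\dots,n\}} c_S \prod_{i \in S} x_i
\]
for some integer coefficients $c_S$ that depend only on the linking numbers $\ell_{ij}$. Expanding by complete permutation expansion of the determinant, one sees that the coefficient of the top monomial $x_1 x_2 \cdots x_n$ is exactly $1$ (it comes from the identity permutation), while all other $c_S$ are certain signed products of linking numbers.

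Next, I would bound the lower-order terms. Set $K := \sum_{S \subsetneq \{1,\dots,n\}} |c_S|$, which is a constant depending only on $\mathbb{L}$. For any proper subset $S \subsetneq \{1,\dots,n\}$ and any $x_1,\dots,x_n \geq M$ with $M \geq 1$, pick any $i^* \notin S$; then
\[
\prod_{i \in S} x_i \;=\; \frac{\prod_{i=1}^n x_i}{\prod_{j \notin S} x_j} \;\leq\; \frac{\prod_{i=1}^n x_i}{x_{i^*}} \;\leq\; \frac{1}{M} \prod_{i=1}^n x_i,
\]
using that all the omitted factors are $\geq M \geq 1$. Summing over $S \subsetneq \{1,\dots,n\}$ gives
\[
f(x_1,\dots,x_n) \;\geq\; \prod_{i=1}^n x_i \;-\; K \cdot \frac{1}{M}\prod_{i=1}^n x_i \;=\; \Bigl(1 - \tfrac{K}{M}\Bigr)\prod_{i=1}^n x_i.
\]
Taking $M := \max(1, K+1)$ makes the right-hand side strictly positive whenever all $x_i \geq M$, completing the proof.

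There is no real obstacle here; the only subtlety is making sure that the bound $\prod_{i \in S} x_i \leq \frac{1}{M}\prod_i x_i$ applies uniformly to \emph{all} proper subsets, including the empty set and singletons, which is why I impose $M \geq 1$ in addition to $M > K$. If a cleaner presentation is wanted, one could alternatively argue by induction on $n$ using the expansion \eqref{eq:f-det-affine}, where the inductive hypothesis applies to $g_i$ (which is the corresponding determinant for the sublink obtained by deleting $L_i$) and $h_i$ has strictly lower total degree; but the direct monomial expansion above is the most transparent.
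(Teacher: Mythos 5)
Your proof is correct and uses essentially the same approach as the paper: expand the determinant via the Leibniz formula, observe that the leading monomial $x_1\cdots x_n$ has coefficient $1$, and bound all lower-order terms by $\frac{1}{M}\prod_i x_i$ once every $x_i \geq M$. The paper groups the lower-order terms by permutation (giving the constant $M = n!\cdot(\max_{i\neq j}|\ell_{ij}|+1)$) while you group them by squarefree monomial (giving $M = \max(1, K+1)$ with $K$ the sum of absolute values of the non-leading coefficients), but these are just two bookkeepings of the same estimate.
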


\begin{proof}
We let $k = \max_{i\neq j} |\ell_{ij}|+1$, and set $M = n! \cdot k$, which depends only on the linking matrix of $\mathbb{L}$.  Then we can write
\[ f(x_1,\dots,x_n) = x_1x_2\dots x_n + \sum_{\sigma \in \mathcal{S}_n\setminus\{1\}} (-1)^\sigma p_\sigma, \]
where $p_\sigma$ is the product over all $i$ of the $(i,\sigma(i))$-entries of the matrix whose determinant is $f$.  When $\sigma \neq 1$, at least one of these entries is not a diagonal entry, so if the non-diagonal entry is $\ell_{j,\sigma(j)}$ and all of the $x_i$ satisfy $x_i \geq M \geq k$, then we have
\[ \left| \frac{p_\sigma}{x_1x_2\dots x_n}\right| \leq \left| \frac{x_1\dots x_{j-1} \cdot \ell_{j,\sigma(j)} \cdot x_{j+1} \dots x_n}{x_1x_2\dots x_n} \right| = \frac{|\ell_{j,\sigma(j)}|}{x_j} < \frac{k}{M} = \frac{1}{n!}. \]
This works for all $n!-1$ of the summands where $\sigma \neq 1$, so we can now use the above expression to bound $f$ below (assuming again that $x_i \geq M$ for all $i$) by
\[ f(x_1,\dots,x_n) \geq x_1x_2\dots x_n \left(1 - (n!-1)\frac{1}{n!}\right) > 0. \qedhere \]
\end{proof}

\begin{remark}
We do not claim that $M=n!\cdot k$ is anywhere near optimal; it can certainly be improved by using Ostrowski's inequality, as in the proof of Lemma~\ref{lem:L-space}.
\end{remark}

\begin{proof}[Proof of Proposition~\ref{prop:lspace-rational}]
Since $\mathbb{L}$ is an L-space link, there is a constant $C$ such that $\mathbb{L}(a_1,\dots,a_n)$ is an L-space whenever the $a_i$ are integers greater than or equal to $C$.  We take $M$ as provided by Lemma~\ref{lem:determinant-positive}, and let $C' = \max(C,M)$.  Our goal is to show that $\mathbb{L}(a_1,\dots,a_n)$ is an L-space whenever all of the $a_i$ are \emph{rational} and satisfy $a_i \geq C'$.  Certainly this is true when all of the $a_i$ are integers, because $C' \geq C$.

Suppose that we have some rational slopes $a_1,\dots,a_n \geq C'$, written $a_i = \frac{p_i}{q_i}$ in lowest terms with $q_i > 0$, such that the manifold 
\[ Y = \mathbb{L}(a_1,\dots,a_n) \]
is not an L-space.  Then at least one of the $a_i$ must not be an integer; we let $d \in \{1,\dots,n\}$ be the maximal such index, so that $a_d$ is not an integer but $a_{d+1},\dots,a_n \in \Z$.  We suppose that among all such tuples $(a_1,\dots,a_n)$, we have chosen this one to first minimize the value of $d$, and then to minimize the denominator $q_d$ among all such tuples with this value of $d$.  Since $a_d \not\in \Z$ we have $q_d \geq 2$, and then by the proof of \cite[Proposition~4.3]{bs-concordance} (which works equally well in Heegaard Floer homology) there exists a pair of fractions
\begin{align*}
a'_d &= \frac{p'}{q'}, &
a''_d &= \frac{p''}{q''},
\end{align*}
with $q',q'' \geq 1$ and $\lfloor a_d \rfloor \leq a'_d,a''_d \leq \lceil a_d \rceil$, such that 
\begin{enumerate}
\item we have $(p_d,q_d) = (p'+p'', q'+q'')$, and \label{i:mediant}
\item if we let
\begin{align*}
Y' &= \mathbb{L}(a_1,\dots,a_{d-1}, a'_d, a_{d+1},\dots, a_n), \\
Y'' &= \mathbb{L}(a_1,\dots,a_{d-1}, a''_d, a_{d+1},\dots,a_n),
\end{align*}
then there is a surgery exact triangle of the form
\begin{equation} \label{eq:surgery-triad}
\cdots \to \hfhat(Y') \to \hfhat(Y) \to \hfhat(Y'') \to \cdots .
\end{equation}
\end{enumerate}
In particular, item \eqref{i:mediant} says that either $a'_d \in \Z$ or $2 \leq q' < q_d$, so by our assumption of minimality we see that $Y'$ must be an L-space, and likewise for $Y''$.  Therefore \eqref{eq:surgery-triad} tells us that
\begin{equation} \label{eq:hfhat-upper-bound}
\rank \hfhat(Y) \leq \rank \hfhat(Y') + \rank \hfhat(Y'') = |H_1(Y';\Z)| + |H_1(Y'';\Z)|.
\end{equation}

Now if we write $Q = q_1\dots q_n$, then \eqref{eq:homology-L-surgery} and Lemma~\ref{lem:determinant-positive} tell us that
\begin{align*}
|H_1(Y';\Z)| &= \tfrac{Q}{q_d} \cdot q' \cdot f(a_1,\dots,a_{d-1},a'_d,a_{d+1},\dots,a_n) \\
|H_1(Y'';\Z)| &= \tfrac{Q}{q_d} \cdot q'' \cdot f(a_1,\dots,a_{d-1},a''_d,a_{d+1},\dots,a_n),
\end{align*}
in which the right side of each equation is positive because $a'_d, a''_d \geq \lfloor a_d \rfloor \geq C' \geq M$.  Applying \eqref{eq:f-det-affine} to these equations, we have
\begin{align*}
|H_1(Y';\Z)| &= \tfrac{Q}{q_d} \cdot q' \cdot \left( g_d(a_1,\dots,\widehat{a_d},\dots,a_n) \cdot a'_d + h_d(a_1,\dots,\widehat{a_d},\dots a_n)\right) \\
&= \tfrac{Q}{q_d} \left( g_d(a_1,\dots,\widehat{a_d},\dots,a_n) \cdot p' + h_d(a_1,\dots,\widehat{a_d},\dots a_n) \cdot q' \right)
\end{align*}
and likewise
\[ |H_1(Y'';\Z)| = \tfrac{Q}{q_d} \left( g_d(a_1,\dots,\widehat{a_d},\dots,a_n) \cdot p'' + h_d(a_1,\dots,\widehat{a_d},\dots a_n) \cdot q'' \right), \]
and we add these last two equations together to get
\begin{align*}
|H_1(Y';\Z)|+|H_1(Y'';\Z)| &= \tfrac{Q}{q_d} \left( g_d(a_1,\dots,\widehat{a_d},\dots,a_n) (p'+p'') \right. \\
&\qquad\qquad\left. + h_d(a_1,\dots,\widehat{a_d},\dots a_n) (q'+q'') \right) \\
&= \tfrac{Q}{q_d} \left( g_d(a_1,\dots,\widehat{a_d},\dots,a_n) p_d + h_d(a_1,\dots,\widehat{a_d},\dots a_n) q_d \right) \\
&= Q \cdot \left( g_d(a_1,\dots,\widehat{a_d},\dots,a_n) a_d + h_d(a_1,\dots,\widehat{a_d},\dots a_n) \right) \\
&= q_1\dots q_n \cdot f(a_1,\dots,a_n) \\
&= |H_1(Y;\Z)|.
\end{align*}
Now combining this with the upper bound \eqref{eq:hfhat-upper-bound} gives
\[ \rank \hfhat(Y) \leq |H_1(Y;\Z)| = \chi\left( \hfhat(Y) \right), \]
so equality must hold throughout.  In particular this means that $\hfhat(Y)$ has rank $|H_1(Y;\Z)|$, so $Y$ is an L-space as well and we have a contradiction.
\end{proof}

\bibliographystyle{myalpha}
\bibliography{References}

\end{document}